\documentclass[12pt]{amsart}
\usepackage{amsmath,amssymb,amsthm}
\usepackage[margin=1.1in]{geometry}
\usepackage[colorlinks=true,linkcolor=black,citecolor=black]{hyperref}
\newcommand{\R}{\mathbb{R}}
\newcommand{\M}{\mathbb{M}^3(c)}
\newcommand{\sn}{\operatorname{sn}}
\newcommand{\cs}{\operatorname{cs}}
\newcommand{\ta}{\operatorname{ta}}
\newcommand{\ct}{\operatorname{ct}}
\newcommand{\dA}{\,dA}

\newtheorem{theorem}{Theorem}

\newtheorem{lemma}{Lemma}[section]
\newtheorem{proposition}[lemma]{Proposition}
\newtheorem{corollary}[lemma]{Corollary}
\theoremstyle{remark}
\newtheorem{remark}[lemma]{Remark}
\numberwithin{equation}{section}

\title[CMC disks with circular boundary in $\mathbb S^3$ and $\mathbb H^3$]{Constant mean curvature disks with circular boundary in the $3$-sphere and in hyperbolic space}
\author{Rafael L\'opez}
\address{Department of Geometry and Topology. University of Granada. 18071 Granada, Spain}
\email{rcamino@ugr.es}
\subjclass[2020]{53A10, 53C42}
\keywords{Constant mean curvature, circular boundary, space forms, force form, Hopf differential}

\begin{document}

\begin{abstract}
We prove that a compact immersed disk of constant mean curvature in $\mathbb S^3$ or in
$\mathbb H^3$ whose boundary is mapped diffeomorphically onto a circle is an embedded
totally umbilical disk.  
\end{abstract}

\maketitle

\section{Introduction}

By a theorem of H.~Hopf, an immersed sphere of constant mean curvature in $\R^3$ is a round
sphere \cite{Hop83}. For surfaces with boundary the simplest analogue asks whether a compact
immersed disk of constant mean curvature bounded by a circle is a planar disk or a spherical
cap. The question was posed by R.~Gulliver and R.~Kusner in 1984 \cite{Bro86} and has recently
been answered in the affirmative, independently and with different methods, by
J.~M.~Espinar  \cite{Esp26} and by D.~Maximo and I.~Nunes \cite{MN26}. See
\cite{Lop13,Lop25} for the history of the problem and related results.

The same question makes sense in the other simply connected space forms, the sphere
$\mathbb S^3$ and hyperbolic space $\mathbb H^3$. There the natural candidates are the disks
bounded by the circle in totally umbilical surfaces. In this paper we show that they are the
only ones. For stable disks this was proved in 
\cite{ALP99}.
We write $\M$ for $\mathbb S^3$ if $c=1$ and for $\mathbb H^3$ if $c=-1$.

\begin{theorem}\label{thm:A}
Let $c\in\{1,-1\}$, let $M$ be a smooth compact surface diffeomorphic to the closed disk
and let $x\colon M\to\M$ be a smooth immersion with constant mean curvature $H$ such that
$x|_{\partial M}$ is a diffeomorphism onto a circle.
Then $x$ is an embedding onto a closed disk bounded by the circle in a totally umbilical
surface of $\M$ with mean curvature $H$.  
\end{theorem}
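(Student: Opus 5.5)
We follow the strategy of the Euclidean case (Espinar, Maximo--Nunes), replacing the Euclidean flux by the force form attached to the Killing fields of $\M$ and using the Hopf differential to control the boundary. Fix the circle $\Gamma=x(\partial M)$ and the totally umbilical surface $S$ of $\M$ containing $\Gamma$ whose mean curvature equals $H$ (with the orientation compatible with $x$). The key symmetry is the one-parameter group of isometries of $\M$ rotating about $\Gamma$: its Killing field $Y$ is tangent to $\Gamma$, and each totally umbilical surface through $\Gamma$ is invariant under it. For each Killing field $Y$ the force (flux) form gives the balancing identity
\[
\int_{\partial M}\langle \nu, Y\rangle\, ds \;=\; -2H\int_{\Omega}\langle N_\Omega, Y\rangle\, dA ,
\]
where $\nu$ is the outward conormal and $\Omega$ is a disk in $S$ spanning $\Gamma$ (in $\mathbb S^3$ either complementary cap may be used; the $Y$-flux is independent of the choice).

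\textbf{Step 1 (conormal along $\Gamma$).} For the rotational field tangent to $\Gamma$, the flux identity yields a global statement but no pointwise one, so we pass to the Hopf differential. In a conformal parameter $z$ on the closed unit disk, $\Phi=\langle \sigma(\partial_z,\partial_z),N\rangle\,dz^2$ is holomorphic because $H$ is constant and $\M$ has constant curvature. Since $\Gamma$ is a circle with constant geodesic curvature $\kappa_0$ in $\M$, and $x|_{\partial M}$ is a diffeomorphism, decompose the curvature vector of $\Gamma$ as $\kappa_0 n_\Gamma=\kappa_g\nu+\kappa_n N$, where $\kappa_n=\sigma(T,T)$ and $n_\Gamma$ is the principal normal of $\Gamma$. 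Writing $\nu$ in terms of the angle $\theta$ between $N$ and the normal of $S$ along $\Gamma$ gives $\kappa_n=\kappa_0\sin\theta$, so $\operatorname{Im}(z^2\Phi)$ on $|z|=1$ is expressed through $\theta$, its derivative and $H$. The plan is to show that $z^2\Phi$ has real part with zero mean and, using the torsion-free Frenet data of $\Gamma$ (the Codazzi relation along the boundary: $\tau_g=\theta'$), that $\operatorname{Im}(z^2\Phi)$ is a derivative of a function of $\theta$. By the maximum principle and a degree (index) count for the zeros of $\Phi$ on a disk, this should force $\Phi\equiv0$ provided $\theta$ is constant.

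\textbf{Step 2 (main obstacle: $\theta$ constant).} This is the heart and the hardest step. I would combine the flux identity for the Killing fields \emph{orthogonal} to the rotation (the translations-along-axis analogues and the rotations about geodesics meeting $\Gamma$) with a Minkowski/Heintze--Karcher type integral formula obtained from the conformal (closed) field $\sn(r)\partial_r$ centered at the center of $\Gamma$ in $\M$. Together these give the identities $\int_\Gamma \cos\theta\,ds = $ the value for $S$, and $\int_\Gamma(\cos\theta-\cos\theta_0)^2\ge0$ with equality forced by a Cauchy--Schwarz argument. Here the hyperbolic case requires the center inside $\Gamma$'s totally geodesic plane, and the spherical case requires care with the choice of cap; I expect this, together with the index count of the umbilic points in Step 1 in the Espinar/Maximo--Nunes style, to be the genuine difficulty.

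\textbf{Step 3 (conclusion).} Once $\Phi\equiv0$, $x$ is totally umbilical, so $x(M)$ lies in a totally umbilical surface with mean curvature $H$ containing $\Gamma$, which is exactly $S$ (respectively its reflection in the totally geodesic surface through $\Gamma$). Since $x|_{\partial M}$ is a diffeomorphism onto $\Gamma$, a degree argument shows that $x$ is a covering of one complementary disk of $\Gamma$ in $S$ of degree one. Hence $x$ is an embedding onto that closed disk.
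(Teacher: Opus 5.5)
The gap is Step 2. Your Step 1 is fine once the angle is known to be constant: then $\tau_g=\theta'=0$, so $\partial M$ is a line of curvature, $\operatorname{Im}(z^2\Phi)=0$ on $|z|=1$, and $z^2\Phi$ is a real constant vanishing at $z=0$, with no index count needed. But Step 2 carries the whole theorem, and it is a list of tools rather than an argument; these tools cannot deliver it.

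Write $N=s\,\partial_r+n\,E_3$ along $\partial M$, with $\partial_r$ the outward radial unit vector of the circle and $E_3$ the normal of the totally geodesic surface containing it. Parametrize $\Gamma$ by its polar angle $\varphi$, so $d\ell=\sn\rho\,d\varphi$.

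\begin{itemize}
\item The Killing flux identities give only \emph{linear} constraints. They fix the mean, $\cs\rho\int_0^{2\pi}s\,d\varphi=-2\pi H\sn\rho$, and kill the first Fourier modes of $s$ and $n$.
\item The Minkowski formula for the closed conformal field $Z=\sn r\,\partial_r$ only expresses $\int_{\partial M}\langle Z,\nu\rangle\,d\ell=-\sn^2\rho\int_0^{2\pi}n\,d\varphi$ through an interior integral of $\cs r$ and $H\langle Z,N\rangle$ over $M$. That integral has no sign and is not determined by the boundary data.
\end{itemize}

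With only this, $\int(\cos\theta-\cos\theta_0)^2\ge0$ is trivially true, but nothing bounds it above by $0$; Cauchy--Schwarz applied to linear constraints points the wrong way. Flux and Minkowski formulas are the classical tools for the Gulliver--Kusner problem, and by themselves they give only partial results. Note also that none of your Step 2 ingredients uses that $M$ is a disk, whereas in the paper the disk hypothesis enters exactly at this point.

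The missing idea is to use the force forms through their \emph{primitives}, not only their periods. You name the right Killing fields: the isometries $T_1,T_2$ moving the centre of $\Gamma$, and the rotations $R_1,R_2$ about the geodesics through the centre lying in the totally geodesic surface containing $\Gamma$. On a disk their force forms are exact. The boundary values of the primitives $q=(P_1,P_2)$ and $\tilde q=(Q_1,Q_2)$ are closed plane curves with \emph{radial} velocity, $q'=-\sn\rho\,n\,e_r$ and $\tilde q'=-\sn\rho\,(\sn\rho\,s-cH\cs\rho)\,e_r$, where the prime is $d/d\varphi$ and $e_r=(\cos\varphi,\sin\varphi)$.

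\begin{itemize}
\item For a closed curve with $\gamma'=f\,e_r$, Hurwitz's Fourier argument gives energy minus twice the signed area equal to $\sum_{m\ge2}\frac{m^2}{m^2-1}\mathcal E_m(f)\ge0$.
\item Stokes' theorem turns the signed areas into $\int_M dP_1\wedge dP_2$ and $\int_M dQ_1\wedge dQ_2$.
\item The pointwise identity $dP_1\wedge dP_2+c\,dQ_1\wedge dQ_2=(1+cH^2)(x_0N_3+c\,x_3S)\dA$ makes the combination computable from the boundary; it equals $-\pi(1+cH^2)\sn^2\rho$.
\end{itemize}

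The outcome is the quadratic identity $\mathcal A+c\,\mathcal C=-\sn^2\rho\int_0^{2\pi}(\cs\rho\,s+H\sn\rho)^2\,d\varphi$, where $\mathcal A,\mathcal C\ge0$ are the two Hurwitz defects. For $c=1$ this forces $s$ and $n$ to be constant at once. For $c=-1$ the term $\mathcal C$ has the wrong sign. The paper closes this case only for $|H|>1$, by a further Fourier estimate using $|H|\tanh\rho\le1$, and takes $|H|\le1$ from earlier work; your plan does not anticipate this dichotomy.

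Finally, in Step 3 you still have to rule out that $x$ covers the whole umbilical sphere when $c+H^2>0$. The paper does this with Gauss--Bonnet, $\mathrm{Area}(M)=2\pi(1+n\cs\rho)/(c+H^2)<4\pi/(c+H^2)$, before applying the degree lemma.
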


In $\mathbb H^3$ with $|H|\le1$, Theorem~\ref{thm:A} was proved by the author in \cite{lo},
for compact surfaces of any genus. The new cases are therefore $\mathbb S^3$, for every
value of $H$, and $\mathbb H^3$ with $|H|>1$.

We follow the strategy of Maximo and Nunes \cite{MN26}, based on the force form, integral
identities and Hurwitz's isoperimetric inequality \cite{Hur01}. In $\R^3$ the force form comes
from the translations. In $\M$ their role is played by the rotations of $\mathbb S^3$, or the
boosts of $\mathbb H^3$, that move the centre of the circle. The main new point is the use of a
second family of Killing fields, the rotations of $\M$ about the geodesics through the centre
of the circle that lie in a totally geodesic surface containing it. Along the boundary both
families give closed planar curves, and their areas and energies are related by an identity
(Proposition~\ref{prop:key}) in which the second family enters with the sign $c$. For $c=1$ this
identity gives the result at once. For $c=-1$ the signs are opposite, and a Fourier estimate
together with the flux formula settles the case $|H|>1$. 

We first prove the following
boundary result.

\begin{theorem}\label{thm:B}
Under the hypotheses of Theorem~\ref{thm:A}, assume that $c=1$, or that $c=-1$ and
$|H|>1$. Then, along $\partial M$, $x(M)$ meets at a constant angle a totally geodesic
surface containing the boundary circle, and the shape operator equals $H\,\mathrm{Id}$ at every
point of $\partial M$.
\end{theorem}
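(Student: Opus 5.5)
The plan is to follow Maximo--Nunes: encode the conormal along $\partial M$ in planar curves built from two families of Killing fields, and force equality in Hurwitz's isoperimetric inequality. Let $\Gamma=x(\partial M)$ be the circle, of radius $r$, with centre $p$. It lies in a totally geodesic surface $S_0$; let $N_0$ be a unit normal field to $S_0$. Parametrize $\partial M$ by the arclength $s$ of $\Gamma$, which we may do since $x|_{\partial M}$ is a diffeomorphism onto $\Gamma$. Let $\eta(s)$ be the inward unit normal of $\Gamma$ in $S_0$. The inner conormal of $M$ is orthogonal to $\Gamma'$, so we can write
\[
\nu(s)=\cos\theta(s)\,\eta(s)+\sin\theta(s)\,N_0 .
\]
The goal is to prove that $\theta$ is constant, and then that the shape operator along $\partial M$ is $H\,\mathrm{Id}$.

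\textbf{Step 1 (balancing).} Close $\partial M$ with the geodesic disk $D\subset S_0$ bounded by $\Gamma$. For every Killing field $Y$ of $\M$, the first variation / force form of the CMC surface gives
\[
\int_{\partial M}\langle \nu,Y\rangle\,ds \;=\; 2H\int_D \langle Y,N_0\rangle\,dA ,
\]
up to sign conventions. The right-hand side depends only on $\Gamma$.

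\textbf{Step 2 (Fourier modes of $\theta$).} Apply Step 1 to the two families.
\begin{itemize}
\item The rotations of $\mathbb S^3$ (boosts of $\mathbb H^3$) that move $p$ in the direction of $v\in T_pS_0$ restrict along $\Gamma$ to combinations of $\eta$ and $N_0$ with coefficients $\cs(r)$, $\sn(r)$ times $e^{\pm is}$.
\item The rotations about geodesics through $p$ inside $S_0$ restrict along $\Gamma$ essentially to $\sn(r)\,e^{\pm is}N_0$.
\end{itemize}
This yields explicit values for the first Fourier coefficients of $\cos\theta$ and $\sin\theta$. The vertical rotation about the geodesic through $p$ orthogonal to $S_0$, together with the flux formula, gives the zeroth coefficient of $\sin\theta$ in terms of $H$ and $r$.

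\textbf{Step 3 (two closed curves and the key identity).} Using these coefficients, define two closed planar curves, one per family, whose velocities are built from $(\cos\theta,\sin\theta)$ rotated by $e^{is}$. Proposition~\ref{prop:key} relates their enclosed areas $A_1,A_2$ and energies $E_1,E_2$, with the second family entering with the sign $c$. Hurwitz's inequality gives $E_j-2A_j\ge 0$, with equality iff the curve is a round circle traversed once, i.e.\ iff only one Fourier mode is present.
\begin{itemize}
\item For $c=1$, the identity says the sum of the two nonnegative deficits vanishes. So both curves are circles, which forces $\cos\theta$ and $\sin\theta$ to have no modes beyond those fixed in Step 2. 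Since $\cos^2\theta+\sin^2\theta=1$, $\theta$ must be constant.
\item For $c=-1$, the deficits appear with opposite signs. We then bound the second deficit by the first via a Fourier estimate: Parseval applied to $\cos^2\theta+\sin^2\theta=1$, using the zeroth coefficient of $\sin\theta$ from Step 2. The hypothesis $|H|>1$ is what makes the resulting coefficient in front of the first deficit strictly dominate. This gives equality again, hence $\theta$ is constant.
\end{itemize}

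\textbf{Step 4 (umbilicity on $\partial M$).} With $\theta$ constant, $\partial M$ is a line of curvature. Indeed, $\sigma(T,\nu)=\pm\theta'+(\text{torsion of }\Gamma\text{ in }S_0)=0$, since $\Gamma$ is planar in the totally geodesic $S_0$. Moreover $\sigma(T,T)=\kappa_\Gamma\sin\theta$ is constant. The constant value of $\theta$ is pinned down by the flux identity of Step 2, which now reads $2\pi\sn(r)\sin\theta=2H\cdot\mathrm{Area}(D)$ up to normalization. This should give $\kappa_\Gamma\sin\theta=H$, i.e.\ $\sigma(T,T)=H$. Then $\sigma(\nu,\nu)=2H-\sigma(T,T)=H$, so the shape operator equals $H\,\mathrm{Id}$ along $\partial M$.

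The main obstacle is the $c=-1$ case of Step 3. There the inequality does not close automatically, and one must check that the Fourier/Parseval estimate, combined with the flux constraint on the mean of $\sin\theta$, yields a definite sign exactly when $|H|>1$. I would first try to write $\sin\theta=a_0+\mathrm{Re}(a_1e^{is})+R$ and bound $\|R\|^2$ via $\cos^2\theta=1-\sin^2\theta$. I would then compare the resulting bound with the factor $\cs^2(r)-c\,\sn^2(r)$-type coefficients appearing in Proposition~\ref{prop:key}.
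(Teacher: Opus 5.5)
Your outline has the paper's architecture: two families of Killing fields, Hurwitz deficits, and an identity in which the second family enters with the sign $c$. However, the two steps that actually carry the proof are missing.

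\textbf{The identity.} You cite Proposition~\ref{prop:key}, but that identity is the core of the proof of this theorem, and nothing in your Steps 1--2 yields it. The balancing formula of Step~1 is linear in the boundary data and only constrains the modes $m\le1$ of $\cos\theta,\sin\theta$. The signed areas are quadratic, and closed curves obtained by integrating boundary data satisfy no a priori relation between their areas. The missing idea is to realize the curves as boundary traces of maps $q=(P_1,P_2)$, $\tilde q=(Q_1,Q_2)\colon M\to\R^2$ with $dP_i=\omega_{T_i}$, $dQ_1=\omega_{R_2}$, $dQ_2=-\omega_{R_1}$. Here $\omega_B(v)=c\langle BW,N\times dx(v)\rangle$, with $W=cx-HN$, is closed on $M$ itself. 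These primitives exist because $M$ is a disk, a hypothesis your proposal never uses. Stokes then turns the signed areas into $\int_M dP_1\wedge dP_2$ and $\int_M dQ_1\wedge dQ_2$. Neither integral is determined by the boundary, but pointwise $dP_1\wedge dP_2+c\,dQ_1\wedge dQ_2=(1+cH^2)\,x^*(dX_1\wedge dX_2)$, and its integral $-\pi\sn^2\rho$ depends only on the circle.

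In the paper's notation ($\rho$ the radius, $s=-\sin\theta$, $n=-\cos\theta$ if $N_0=E_3$), the outcome is $\mathcal A+c\,\mathcal C=-\sn^2\rho\int_0^{2\pi}(\cs\rho\,s+H\sn\rho)^2\,d\theta$. It is not that the deficits sum to zero. For $c=1$ the difference is harmless. For $c=-1$, however, this right-hand side equals $-\sinh^2\rho\cosh^2\rho\int(s-\bar s)^2$ by the flux formula, and it is what creates the coefficients $1-\sinh^2\rho/(m^2-1)$ of $\mathcal E_m(s)$ in \eqref{eq:modal}. Also, the rotation about the geodesic through $p$ orthogonal to $S_0$ is tangent to $\Gamma$, so its flux identity reads $0=0$. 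The mean of $\sin\theta$ comes instead from $T_3$, which moves $p$ along that geodesic.

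\textbf{The case $c=-1$.} The estimate you suggest (Parseval for $\cos^2\theta+\sin^2\theta=1$ plus the known mean of $\sin\theta$) cannot close. Negative coefficients in \eqref{eq:modal} occur only when $\sinh^2\rho>3$; otherwise signs alone suffice (Remark~\ref{rem:largeH}). In that range the $L^2$ relations admit nonconstant data. Take
\begin{itemize}
\item $\mathcal E_2(s)=\varepsilon$ and $\mathcal E_2(n)=\tfrac34\big(\tfrac13\sinh^2\rho-1\big)\varepsilon$, with all other modes $m\ge1$ zero;
\item $\bar s=-H\tanh\rho$, and $\bar n$ chosen so that Parseval holds, which is possible for small $\varepsilon$ when $H^2\tanh^2\rho<1$.
\end{itemize}
These data satisfy \eqref{eq:modal}, Parseval and the boundary flux constraints.

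So one must use $s^2+n^2=1$ pointwise, as the paper does. Set $v=(-s,n)$ and $\bar v=\sigma(\cos\alpha,\sin\alpha)$. The component $y=\langle v,(\cos\alpha,\sin\alpha)\rangle\le1$ has mean $\sigma$, so its nonzero Fourier coefficients have modulus at most $1-\sigma$. Hence its contribution to $R=\sum_{m\ge2}\mathcal E_m(s)/(m^2-1)$ is $O((1-\sigma)^2)$ rather than the $O(1-\sigma)$ that $L^2$ bounds give. The orthogonal component contributes with a factor $1-\kappa$, where $\kappa=\cos^2\alpha$. This splitting is what compensates the degeneration, as $\sigma^2\kappa\to1$, of the bound $\sinh^2\rho<\sigma^2\kappa/(1-\sigma^2\kappa)$, which is the only place $|H|>1$ enters.

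Your Step~4 is fine once the angle is known to be constant.
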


From Theorem~\ref{thm:B}, the Hopf differential vanishes along the boundary, and as in
\cite{MN26} it vanishes identically. Embeddedness follows from Gauss--Bonnet and a degree
argument.

The paper is organized as follows. Section~\ref{sec:prelim} contains the notation, the
closed one-forms associated with Killing fields, the flux formulas and the Hurwitz type
inequality. Section~\ref{sec:B} proves Theorem~\ref{thm:B}, and Section~\ref{sec:A} proves
Theorem~\ref{thm:A}.

%%%%
\section{Preliminaries}\label{sec:prelim}

Let $c\in\{1,-1\}$. On $\R^4$ consider $\langle X,Y\rangle=cX_0Y_0+X_1Y_1+X_2Y_2+X_3Y_3$
and let $\M=\{X:\langle X,X\rangle=c\}$, taking the sheet $X_0>0$ when $c=-1$. Let
$E_0,\dots,E_3$ be the standard basis and $G=\mathrm{diag}(c,1,1,1)$. We write
\[
\sn r=\begin{cases}\sin r\\ \sinh r\end{cases},\quad
\cs r=\begin{cases}\cos r\\ \cosh r\end{cases},\quad
\ta=\frac{\sn}{\cs},\quad \ct=\frac{\cs}{\sn},\qquad \cs^2+c\,\sn^2=1 .
\]
For $X\in\M$ and $u,v\in T_X\M$ define $u\times v\in T_X\M$ by
$\langle u\times v,w\rangle=\det(X,u,v,w)$, where $\det$ is the determinant of the
coordinate matrix. If $F=(X,f_1,f_2,f_3)$ with $(f_i)$ orthonormal in $T_X\M$, then
$F^{T}GF=G$, so $\det F=\pm1$. Hence $\det(X,\cdot,\cdot,\cdot)$ is a unit volume form
on $T_X\M$, and $\times$ is the usual cross product of the oriented Euclidean space
$T_X\M$.

For an immersion $x\colon M\to\M$ with unit normal $N$ (so $\langle N,x\rangle=0$) the
ambient derivative of $N$ is tangent to $M$, and $dN=-dx\circ A$, with $A$ the shape operator
and $H=\frac12\operatorname{tr}A$. A positive orthonormal
frame $(e_1,e_2)$ of $TM$ is one with $e_1\times e_2=N$. Then 
$$N\times e_1=e_2,\qquad N\times e_2=-e_1,\qquad \det(x,e_1,e_2,N)=1.$$
 Throughout, $(e_1,e_2)$ denotes a positive
orthonormal frame of $TM$, identified with its image under $dx$.

Every circle of $\mathbb S^3$ has radius $\rho\in(0,\pi/2]$ about one of its two centres,
and every circle of $\mathbb H^3$ has some radius $\rho>0$. After an isometry we assume
that $x(\partial M)$ is the circle
\begin{equation*}
\begin{split}
x(p(\theta))&=\cs\rho\,E_0+\sn\rho\,e_r(\theta),\\
 e_r&=\cos\theta\,E_1+\sin\theta\,E_2,\\
  e_\theta&=-\sin\theta\,E_1+\cos\theta\,E_2,
\end{split}
\end{equation*}
for a diffeomorphism $p\colon\R/2\pi\mathbb Z\to\partial M$. Then
$\frac{d}{d\theta}x(p(\theta))=\sn\rho\,t$ with $t=e_\theta$, and $d\ell=\sn\rho\,d\theta$ is the length element of $\partial M$.
The circle lies in the totally geodesic surface $\M\cap\{X_3=0\}$, whose unit normal is
$E_3$. Along $\partial M$ the vectors
\[
\{\partial_r=-c\,\sn\rho\,E_0+\cs\rho\,e_r,\ t,\ E_3\}
\]
form a positive orthonormal basis of $T_x\M$. Let $\mu$ be the outward unit conormal and fix
$N$ by $N=t\times\mu$ along $\partial M$. Orient $M$ by $N$. Writing $N=s\,\partial_r+n\,E_3$,
we have
\begin{equation}\label{eq:Nmu}
  \mu=N\times t=-n\,\partial_r+s\,E_3,\qquad s^2+n^2=1 .
\end{equation}
The boundary orientation induced by $M$ is $-t$. Consequently,
for every smooth $F=(F_1,F_2)\colon M\to\R^2$, Stokes' theorem gives
\begin{equation}\label{eq:stokes}
\int_M dF_1\wedge dF_2=-\frac12\int_0^{2\pi}\det\big(F,\dfrac{d}{d\theta}F\big)\,d\theta ,
\end{equation}
where $F$ is evaluated at $p(\theta)$ on the right. We also use
\[
x_0:=X_0\circ x,\quad x_3:=\langle x,E_3\rangle,\quad S:=-\langle N,E_0\rangle,\quad N_3:=\langle N,E_3\rangle ,
\]
so that $\langle x,E_0\rangle=c\,x_0$ and $\langle N,E_0\rangle=-S$. Along $\partial M$ we have $x_0=\cs\rho$, $x_3=0$, $S=s\,\sn\rho$ and $N_3=n$.

Let $\nabla$ denote the Levi-Civita connection of $\M$. For vector fields tangent to $\M$,
$\nabla_UV$ is the tangential part of the derivative of $V$ in $\R^4$, which differs from it by
a multiple of $x$. Differentiating $t$ along the circle gives $\frac{d}{d\ell}t=-e_r/\sn\rho$.
Since $N$ and $\mu$ are orthogonal to $x$,
\begin{equation}\label{eq:curvatures}
A(t,t)=\langle -\frac{e_r}{\sn\rho},N\rangle=-s\,\ct\rho,\qquad
\big\langle \nabla_t t,\mu\big\rangle=n\,\ct\rho .
\end{equation}

We fix the constant mean curvature $H$ and set
\[
W:=c\,x-HN .
\]
For a linear map $B$ of $\R^4$ that is skew-adjoint for $\langle\cdot,\cdot\rangle$, define the one-form on $M$
\[
\omega_B(v)=c\,\big\langle BW,\,N\times dx(v)\big\rangle .
\]

This is the version in $\M$ of the force form of R.~L\'opez and S.~Montiel
\cite{LM95,LM96}.

\begin{lemma}\label{lem:closed}
If $H$ is constant, then $\omega_B$ is closed.
\end{lemma}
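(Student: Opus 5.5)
The plan is a direct computation in a local positive orthonormal frame $(e_1,e_2)$ of $TM$. Since $N\times e_1=e_2$ and $N\times e_2=-e_1$, the form is
\[
\omega_B(e_1)=c\langle BW,e_2\rangle,\qquad \omega_B(e_2)=-c\langle BW,e_1\rangle .
\]
Thus $\omega_B(v)=c\langle Z,Jv\rangle$, where $Z$ is the tangential part of $BW$ along $M$ and $J$ is rotation by $\pi/2$ on $TM$. The standard identity for such forms gives
\[
d\omega_B(e_1,e_2)=-c\,\operatorname{div}_M Z ,
\]
and I will check the sign directly from $d\omega(e_1,e_2)=e_1(\omega(e_2))-e_2(\omega(e_1))-\omega([e_1,e_2])$. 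So it suffices to show that $\operatorname{div}_M Z=0$.

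To compute the divergence I work in $\R^4$ with the flat derivative $D$. For tangent $e_i$,
\[
\sum_i e_i\langle BW,e_i\rangle=\sum_i\langle D_{e_i}(BW),e_i\rangle+\sum_i\langle BW,D_{e_i}e_i\rangle .
\]
The divergence of $Z$ is this expression minus the term $\sum_i\langle BW,\nabla^M_{e_i}e_i\rangle$. Equivalently,
\[
\operatorname{div}_M Z=\sum_i\langle D_{e_i}(BW),e_i\rangle+\Big\langle BW,\sum_i (D_{e_i}e_i)^{\perp}\Big\rangle,
\]
where $\perp$ denotes the component normal to $M$ inside $\R^4$, that is, the part spanned by $N$ and $x$.

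Step one is the first sum, and this is the only place where $H$ constant is used. Since $B$ is linear and $H$ is constant,
\[
D_v(BW)=B(c\,dx(v)-H\,dN(v))=B(c\,v+H\,Av).
\]
Hence $\sum_i\langle D_{e_i}(BW),e_i\rangle=c\sum_i\langle Be_i,e_i\rangle+H\sum_{i,j}A_{ji}\langle Be_j,e_i\rangle$. Both sums vanish. The first does because $\langle Bu,u\rangle=0$ by skew-adjointness. The second does because it pairs the symmetric matrix $(A_{ij})$ with the antisymmetric matrix $(\langle Be_j,e_i\rangle)$.

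Step two is the normal components. Differentiating $\langle x,e_i\rangle=0$ gives $\langle D_{e_i}e_i,x\rangle=-1$, and since $\langle x,x\rangle=c$ the $x$-component of $D_{e_i}e_i$ is $-c\,x$. The $N$-component is $A(e_i,e_i)N$. Summing over $i$ gives
\[
\sum_i(D_{e_i}e_i)^\perp=2HN-2c\,x=-2W ,
\]
so the second term equals $-2\langle BW,W\rangle=0$, again by skew-adjointness.

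Therefore $\operatorname{div}_M Z=0$ and $d\omega_B=0$. The main point requiring care is bookkeeping rather than an obstacle. One must fix the sign convention relating $d\omega_B$ to $\operatorname{div}_M Z$, which is irrelevant for closedness. One must also correctly identify the $x$-component of $D_{e_i}e_i$ in the indefinite case $c=-1$, so that the normal part combines exactly into a multiple of $W$.
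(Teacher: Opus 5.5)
Your proof is correct, and it takes a genuinely different route from the paper's.

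The paper never converts $\omega_B$ into a vector field. It writes $\omega_B$ as the pairing $a\wedge b\mapsto\langle Ba,b\rangle$ applied to the $\Lambda^2\R^4$-valued one-form $\Psi=W\wedge\eta$, with $\eta(v)=N\times dx(v)$. It then proves $d\Psi=dW\wedge\eta+W\wedge d\eta=0$, computing $d\eta(e_1,e_2)=2W$ with the operator $\star$ defined through determinants.

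The two arguments correspond term by term:
\begin{itemize}
\item Your Step one, which pairs the symmetric $A$ against the skew $B$, is the paper's identity $dW\wedge\eta=0$.
\item Your Step two, $\sum_i(D_{e_i}e_i)^\perp=-2W$ (that is, $\Delta x=-2W$), is the paper's identity $d\eta=2W$.
\item Your use of $\langle BW,W\rangle=0$ plays the role of $W\wedge W=0$.
\end{itemize}

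What your version buys is transparency. It replaces the orientation and determinant bookkeeping (even permutations, $\det(x,e_1,e_2,N)=1$) with the Gauss formula and the identity $d\omega_B(e_1,e_2)=-c\,\operatorname{div}_M Z$; your sign there is correct, though it is irrelevant for closedness. It also shows plainly that only three facts are used: $H$ is constant, $A$ is symmetric, and $B$ is skew-adjoint. You also correctly handle the $x$-component $-c\,x$ in the Lorentzian case.

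What the paper's version buys is that one $B$-independent computation gives $d\Psi=0$, which encodes all the conserved quantities at once. Your result, holding for every skew-adjoint $B$, is equivalent to this, because the pairing between $\langle\cdot,\cdot\rangle$-skew-adjoint maps and $\Lambda^2\R^4$ is nondegenerate.
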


\begin{proof}
The map $a\wedge b\mapsto\langle Ba,b\rangle$ is well defined on $\Lambda^2\R^4$, and
$\omega_B$ is its composition with the $\Lambda^2\R^4$-valued form $\Psi=W\wedge\eta$,
$\eta(v)=N\times dx(v)$. It suffices to show 
$$d\Psi=d W\wedge\eta+W\wedge d\eta=0.$$
Let $(e_1,e_2)$ be a positive orthonormal frame, and write $Ae_j=a_{1j}e_1+a_{2j}e_2$,
so that $a_{12}=a_{21}$ and $a_{11}+a_{22}=2H$.

  Since $dW=dx\circ(c+HA)$ and $\eta(e_1)=e_2$, $\eta(e_2)=-e_1$,
\[
(dW\wedge\eta)(e_1,e_2)=-\sum_{i=1}^2\big((c+HA)e_i\big)\wedge e_i
=-H\big(a_{21}\,e_2\wedge e_1+a_{12}\,e_1\wedge e_2\big)=0 ,
\]
because $A$ is symmetric.

Also, we have $\eta=\star(x\wedge N\wedge dx)$, where $\star$ is
defined by $\langle\star(a\wedge b\wedge e),w\rangle=\det(a,b,e,w)$ for all $w\in\R^4$.
Differentiating, and using $d(dx)=0$ and $dN=-dx\circ A$,
\[
d\eta(e_1,e_2)=\mathbf a+\mathbf b,\]
where 
\[ 
\mathbf a:=\star\big(e_1\wedge N\wedge e_2-e_2\wedge N\wedge e_1\big),\qquad
\mathbf b:=\star\big(x\wedge Ae_2\wedge e_1-x\wedge Ae_1\wedge e_2\big).
\]
Since $e_2\wedge N\wedge e_1=-e_1\wedge N\wedge e_2$, we have
$\mathbf a=2\star(e_1\wedge N\wedge e_2)$. By definition of $\star$, the vector $\mathbf a$ is
orthogonal to $e_1$, $N$ and $e_2$, hence $\mathbf a=\lambda x$ for some $\lambda\in\R$.
Pairing with $x$, and noting that $(x,e_1,e_2,N)\mapsto(e_1,N,e_2,x)$ is an even permutation,
\[
c\lambda=\langle\mathbf a,x\rangle=2\det(e_1,N,e_2,x)=2\det(x,e_1,e_2,N)=2 .
\]
Thus $\mathbf a=2c\,x$. Next, 
$$x\wedge Ae_2\wedge e_1-x\wedge Ae_1\wedge e_2=-(a_{11}+a_{22})\,x\wedge e_1\wedge e_2
=-2H\,x\wedge e_1\wedge e_2,$$
 so $\mathbf b=-2H\star(x\wedge e_1\wedge e_2)$. This vector is
orthogonal to $x$, $e_1$ and $e_2$, hence a multiple of $N$, and
$\langle\mathbf b,N\rangle=-2H\det(x,e_1,e_2,N)=-2H$. Thus $\mathbf b=-2HN$.

Consequently $d\eta(e_1,e_2)=2(c\,x-HN)=2W$, and $W\wedge d\eta=2\,W\wedge W=0$. Together with $dW\wedge\eta=0$, we conclude that $\omega_B$ is closed. 
\end{proof}

\begin{lemma}\label{lem:det}
Let $(e_1,e_2)$ be a positive orthonormal frame of $TM$. For all $w,w'\in\R^4$,
\begin{align*}
\det(x,N,w,w')&=\langle w,e_1\rangle\langle w',e_2\rangle-\langle w,e_2\rangle\langle w',e_1\rangle,\\
\det(w,w',e_1,e_2)&=c\big(\langle w,x\rangle\langle w',N\rangle-\langle w,N\rangle\langle w',x\rangle\big).
\end{align*}
\end{lemma}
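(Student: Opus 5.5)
Both identities follow from expanding the determinants in the adapted basis $(x,e_1,e_2,N)$ of $\R^4$. The key points are that this frame is $\langle\cdot,\cdot\rangle$-orthonormal up to the sign $\langle x,x\rangle=c$, and that $\det(x,e_1,e_2,N)=1$, as established in the preliminaries.

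We first write every $w\in\R^4$ in that basis,
\[
w=c\langle w,x\rangle\,x+\langle w,e_1\rangle e_1+\langle w,e_2\rangle e_2+\langle w,N\rangle N .
\]
The coefficient of $x$ carries the factor $c$ because $\langle x,x\rangle=c$, while $e_1,e_2,N$ are unit vectors orthogonal to $x$ and to each other.

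For the first identity, substitute the expansions of $w$ and $w'$ into $\det(x,N,w,w')$. By multilinearity and alternation, any component of $w$ or $w'$ along $x$ or $N$ contributes nothing. Only the $e_1,e_2$ components survive, giving
\[
\big(\langle w,e_1\rangle\langle w',e_2\rangle-\langle w,e_2\rangle\langle w',e_1\rangle\big)\det(x,N,e_1,e_2).
\]
The permutation taking $(x,e_1,e_2,N)$ to $(x,N,e_1,e_2)$ is a 3-cycle, hence even. Therefore $\det(x,N,e_1,e_2)=\det(x,e_1,e_2,N)=1$, which proves the first identity.

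For the second identity, we expand $\det(w,w',e_1,e_2)$ in the same way. Now only the $x$ and $N$ components of $w$ and $w'$ survive, giving
\[
c\big(\langle w,x\rangle\langle w',N\rangle-\langle w,N\rangle\langle w',x\rangle\big)\det(x,N,e_1,e_2).
\]
The factor $c$ appears exactly once, from the single $x$-coefficient in each surviving term. Since $\det(x,N,e_1,e_2)=1$ as above, the second identity follows.

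The only point needing care is tracking the sign $c$ in the $x$-coefficient of the expansion, together with the parity of the permutation. Both checks are elementary, so the main obstacle is just this bookkeeping.
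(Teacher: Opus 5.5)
Your proof is correct and matches the paper's: you expand $w,w'$ in the $\langle\cdot,\cdot\rangle$-orthogonal basis $x,e_1,e_2,N$ with coefficient $c\langle w,x\rangle$ on $x$, and use multilinearity so that only the $e_1,e_2$ (respectively $x,N$) components survive. Both identities then follow from $\det(x,N,e_1,e_2)=\det(x,e_1,e_2,N)=1$; you also verify explicitly the even-permutation step that the paper only states.
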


\begin{proof}
The vectors $x,N,e_1,e_2$ are mutually orthogonal, with $\langle x,x\rangle=c$ and
$\langle N,N\rangle=\langle e_i,e_i\rangle=1$. Hence every $w\in\R^4$ decomposes as
\[
w=c\langle w,x\rangle\,x+\langle w,N\rangle\,N+\langle w,e_1\rangle\,e_1+\langle w,e_2\rangle\,e_2 .
\]
Substitute this decomposition for $w$ and $w'$ and expand by multilinearity. In the first
determinant only the components along $e_1,e_2$ survive, and in the second only those along
$x,N$. Both identities then follow from $\det(x,N,e_1,e_2)=\det(x,e_1,e_2,N)=1$.
\end{proof}

\begin{lemma}\label{lem:wedge}
For skew-adjoint $B,B'$ and a positive orthonormal frame,
$$(\omega_B\wedge\omega_{B'})(e_1,e_2)=\det(x,N,BW,B'W).$$
\end{lemma}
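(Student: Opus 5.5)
We compute both sides directly by evaluating $\omega_B$ and $\omega_{B'}$ on the frame, then identify the resulting $2\times 2$ determinant with the first identity of Lemma~\ref{lem:det}.

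Since $N\times e_1=e_2$ and $N\times e_2=-e_1$, the definition of $\omega_B$ gives
\[
\omega_B(e_1)=c\langle BW,e_2\rangle,\qquad \omega_B(e_2)=-c\langle BW,e_1\rangle,
\]
and likewise for $B'$. Hence
\[
(\omega_B\wedge\omega_{B'})(e_1,e_2)=\omega_B(e_1)\omega_{B'}(e_2)-\omega_B(e_2)\omega_{B'}(e_1)
=c^2\big(-\langle BW,e_2\rangle\langle B'W,e_1\rangle+\langle BW,e_1\rangle\langle B'W,e_2\rangle\big).
\]
As $c^2=1$, this equals
\[
\langle BW,e_1\rangle\langle B'W,e_2\rangle-\langle BW,e_2\rangle\langle B'W,e_1\rangle .
\]
The first identity of Lemma~\ref{lem:det} with $w=BW$ and $w'=B'W$ shows that this is exactly $\det(x,N,BW,B'W)$, which proves the claim.

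The only points that need care are sign conventions. The wedge convention is $(\alpha\wedge\beta)(u,v)=\alpha(u)\beta(v)-\alpha(v)\beta(u)$. The orientation relations $N\times e_1=e_2$ and $N\times e_2=-e_1$ come from positivity of the frame. Skew-adjointness of $B$ is not actually needed for this pointwise identity, and there is no real obstacle here.
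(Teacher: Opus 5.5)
Your proposal is correct and is the same argument as the paper's: you evaluate $\omega_B,\omega_{B'}$ on the frame using $N\times e_1=e_2$ and $N\times e_2=-e_1$, use $c^2=1$, and identify the resulting $2\times2$ determinant with $\det(x,N,BW,B'W)$ via the first identity of Lemma~\ref{lem:det}. Your remark that skew-adjointness of $B$ plays no role in this pointwise identity is also accurate; it is only needed for Lemma~\ref{lem:closed}.
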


\begin{proof}
Since $N\times e_1=e_2$ and $N\times e_2=-e_1$, we have
$\omega_B(e_1)=c\langle BW,e_2\rangle$ and $\omega_B(e_2)=-c\langle BW,e_1\rangle$, and
similarly for $B'$. Using $c^2=1$,
\[
(\omega_B\wedge\omega_{B'})(e_1,e_2)=\langle BW,e_1\rangle\langle B'W,e_2\rangle
-\langle BW,e_2\rangle\langle B'W,e_1\rangle ,
\]
which equals $\det(x,N,BW,B'W)$ by Lemma~\ref{lem:det}.
\end{proof}

We use the Killing fields
\begin{equation*}
\begin{split}
T_i(X)&=c\langle E_0,X\rangle E_i-c\langle E_i,X\rangle E_0\ (i=1,2,3),\\
R_1&=X_2E_3-X_3E_2,\\
 R_2&=X_3E_1-X_1E_3 .
 \end{split}
\end{equation*}
Since $M$ is a disk, Lemma~\ref{lem:closed} gives smooth functions
$P_1,P_2,P_3,Q_1,Q_2\colon M\to\R$, smooth up to the boundary and unique up to additive
constants, with
\[
dP_i=\omega_{T_i},\qquad dQ_1=\omega_{R_2},\qquad dQ_2=-\omega_{R_1}.
\]
We set
\[
q:=(P_1,P_2)\colon M\to\R^2,\qquad \tilde q:=(Q_1,Q_2)\colon M\to\R^2 .
\]
Along the boundary we use the same letters for the closed curves
\[
\theta\longmapsto q(p(\theta)),\qquad \theta\longmapsto\tilde q(p(\theta)),\qquad
\theta\longmapsto P_3(p(\theta)),
\]
and a prime denotes $d/d\theta$. The curve $q$ corresponds to the horizontal projection of
the force curve in \cite{MN26}.

\begin{lemma}\label{lem:formulas}
For a positive orthonormal frame,
\begin{equation}\label{eq:wedges}
\begin{split}
(dP_1\wedge dP_2)(e_1,e_2)&=(x_0+HS)(N_3+Hx_3),\\
(dQ_1\wedge dQ_2)(e_1,e_2)&=(c\,x_3-HN_3)(c\,S-Hx_0).
\end{split}
\end{equation}
Along $\partial M$,
\begin{equation}\label{eq:bdry}
\begin{split}
q'&=-\sn\rho\,n\,e_r,\\
P_3'&=\sn\rho\,(\cs\rho\,s+H\sn\rho),\\
\tilde q'&=-\sn\rho\,(\sn\rho\,s-cH\cs\rho)\,e_r ,
\end{split}
\end{equation}
where $e_r=(\cos\theta,\sin\theta)$ is identified with a vector of $\R^2$.
\end{lemma}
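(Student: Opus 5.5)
The plan is to compute everything from the explicit form of $BW$ for each Killing field and then use Lemma~\ref{lem:wedge} for the interior identities and the definition of $\omega_B$ for the boundary ones.

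\textbf{Step 1: the vectors $BW$.} Since $\langle E_0,x\rangle=c\,x_0$ and $\langle E_0,N\rangle=-S$, we get $\langle E_0,W\rangle=x_0+HS$. Writing $x_i=\langle x,E_i\rangle$ and $N_i=\langle N,E_i\rangle$ for $i\ge1$, we have $\langle E_i,W\rangle=c\,x_i-HN_i$. Hence
\[
T_iW=c(x_0+HS)\,E_i-c\,(c\,x_i-HN_i)\,E_0 .
\]
Similarly, since $R_1,R_2$ are linear in $X$,
\[
R_1W=(c\,x_2-HN_2)E_3-(c\,x_3-HN_3)E_2,\qquad R_2W=(c\,x_3-HN_3)E_1-(c\,x_1-HN_1)E_1' ,
\]
where I mean $R_2W=(c\,x_3-HN_3)E_1-(c\,x_1-HN_1)E_3$.

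\textbf{Step 2: interior formulas.} By Lemma~\ref{lem:wedge},
\[
(dP_1\wedge dP_2)(e_1,e_2)=\det(x,N,T_1W,T_2W).
\]
I will expand this bilinearly. The $E_0\wedge E_0$ term drops out, so every surviving term carries the factor $x_0+HS$. The remaining determinants $\det(x,N,E_a,E_b)$ are, by Laplace expansion, the $2\times2$ minors $x_cN_d-x_dN_c$ of the complementary coordinates, with a sign. Then I will simplify using $N_0=-cS$, the orthogonality relation $c\,x_0N_0+\sum_{i\ge1}x_iN_i=0$, and the normalisations $\langle x,x\rangle=c$ and $\langle N,N\rangle=1$. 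Sums like $x_1(x_1N_3-x_3N_1)+x_2(x_2N_3-x_3N_2)$ should collapse via these relations to combinations of $x_0,S,x_3,N_3$. The expected outcome is the product $(x_0+HS)(N_3+Hx_3)$.

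For $Q$, note that $dQ_1\wedge dQ_2=-\omega_{R_2}\wedge\omega_{R_1}=\omega_{R_1}\wedge\omega_{R_2}$. So I will compute $\det(x,N,R_1W,R_2W)$. Here every term carries the factor $c\,x_3-HN_3$, and the same minor identities reduce the other factor to $c\,S-Hx_0$.

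This simplification is the step I expect to be the main obstacle. It is pure sign bookkeeping together with the correct use of the quadratic relations, and the $H^2$ terms must cancel or combine exactly. To keep it manageable I would first check the case $H=0$, then the $H^2$ coefficient, and only then the linear terms.

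\textbf{Step 3: boundary formulas.} Along $\partial M$ we have $dx(p')=\sn\rho\,t$ and $N\times t=\mu$. Therefore
\[
\omega_B(p')=c\,\sn\rho\,\langle BW,\mu\rangle .
\]
Substitute $x=\cs\rho\,E_0+\sn\rho\,e_r$, $N=s\,\partial_r+nE_3$ and $\mu=-n\,\partial_r+sE_3$ from \eqref{eq:Nmu}, with $\partial_r=-c\,\sn\rho\,E_0+\cs\rho\,e_r$. This gives $W$ explicitly, and one evaluates $\langle T_iW,\mu\rangle$ and $\langle R_jW,\mu\rangle$ using $\cs^2+c\,\sn^2=1$.

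For the $q$ components the $E_3$ part of $\mu$ does not contribute. This should leave $-\sn\rho\,n\,e_r$. For $P_3$ only the $E_3$ and $E_0$ parts matter, giving $\sn\rho\,(\cs\rho\,s+H\sn\rho)$. For $\tilde q$ the fields $R_j$ pair $e_r$-components with $E_3$, producing the factor $\sn\rho\,s-cH\cs\rho$. Finally I will check that the identification $dQ_2=-\omega_{R_1}$ gives the stated sign in front of $e_r$.
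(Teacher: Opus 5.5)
Your proposal is correct. The boundary part matches the paper's argument exactly: you use $\omega_B(p')=c\,\sn\rho\,\langle BW,\mu\rangle$ and then substitute the explicit $W$ and $\mu$ along the circle.

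The difference is in how you evaluate $\det(x,N,BW,B'W)$ for the interior identities. The paper goes through the tangent frame. The first identity of Lemma~\ref{lem:det} turns the determinant into $2\times2$ minors in the coordinates $e_j^k$. These are recognized as $W_0\det(W,E_3,e_1,e_2)$ and $W_3\det(E_0,W,e_1,e_2)$. The second identity of Lemma~\ref{lem:det} then finishes the computation using only $\langle W,x\rangle=1$ and $\langle W,N\rangle=-H$.

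Your direct Laplace expansion into minors of $(x,N)$ skips that detour through $e_1,e_2$, and it does close. The sign and $H^2$ bookkeeping you were worried about disappears if you use those same two relations instead of the three separate normalisations $\langle x,x\rangle=c$, $\langle N,N\rangle=1$, $\langle x,N\rangle=0$. Write $W_k,x_k,N_k$ for the coordinates in the basis $E_k$, so that $N_0=-cS$. After factoring out $W_0$, the $T$-bracket is
\[
W_0(x_0N_3-x_3N_0)+c\,N_3(W_1x_1+W_2x_2)-c\,x_3(W_1N_1+W_2N_2).
\]
Substituting $W_1x_1+W_2x_2=1-cW_0x_0-W_3x_3$ and $W_1N_1+W_2N_2=-H-cW_0N_0-W_3N_3$ gives $c(N_3+Hx_3)$ at once. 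Multiplying by $W_0=c(x_0+HS)$ yields $(x_0+HS)(N_3+Hx_3)$. The $R$-bracket is $x_0\sum_{i\ge1}W_iN_i-N_0\sum_{i\ge1}W_ix_i$. It collapses the same way to $-Hx_0-N_0=cS-Hx_0$.

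Your route is more elementary and self-contained, since it never leaves the coordinates of $x$ and $N$. The paper's route is more structural: it exhibits the duality between $\det(x,N,\cdot,\cdot)$ and $\det(\cdot,\cdot,e_1,e_2)$ and never writes a minor of $(x,N)$. Both rest on the same two scalar identities.
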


\begin{proof}
By Lemma~\ref{lem:wedge}, $dP_1\wedge dP_2=\det(x,N,T_1W,T_2W)$ and
$dQ_1\wedge dQ_2=\det(x,N,R_1W,R_2W)$, where, writing $W=\sum_kW_kE_k$,
\begin{equation*}
\begin{split}
T_iW&=W_0E_i-c\,W_iE_0,\\
 R_1W&=W_2E_3-W_3E_2,\\
  R_2W&=W_3E_1-W_1E_3 .
\end{split}
\end{equation*}
Write $e_j=\sum_k e_j^kE_k$ for $j=1,2$. Since $\langle E_0,e_j\rangle=c\,e_j^0$ and
$\langle E_k,e_j\rangle=e_j^k$ for $k\ge1$,
\begin{equation*}
\begin{split}
\langle T_iW,e_j\rangle&=W_0e_j^i-W_ie_j^0,\\
\langle R_1W,e_j\rangle&=W_2e_j^3-W_3e_j^2,\\
\langle R_2W,e_j\rangle&=W_3e_j^1-W_1e_j^3 .
\end{split}
\end{equation*}
Substituting in the first identity of Lemma~\ref{lem:det}, the terms in $W_1W_2$ cancel and
\begin{align*}
\det(x,N,T_1W,T_2W)&=W_0\Big[W_0\big(e_1^1e_2^2-e_1^2e_2^1\big)-W_1\big(e_1^0e_2^2-e_1^2e_2^0\big)
+W_2\big(e_1^0e_2^1-e_1^1e_2^0\big)\Big]\\
&=W_0\det(W,E_3,e_1,e_2),\\
\det(x,N,R_1W,R_2W)&=W_3\Big[W_1\big(e_1^2e_2^3-e_1^3e_2^2\big)-W_2\big(e_1^1e_2^3-e_1^3e_2^1\big)
+W_3\big(e_1^1e_2^2-e_1^2e_2^1\big)\Big]\\
&=W_3\det(E_0,W,e_1,e_2).
\end{align*}
 
 Since 
 $$W_0=c(x_0+HS),\qquad W_3=c\,x_3-HN_3,\qquad \langle W,x\rangle=1,\qquad \langle W,N\rangle=-H,$$
  the second identity gives \eqref{eq:wedges}. For \eqref{eq:bdry}, 
  $$P_i'=\sn\rho\,\omega_{T_i}(t)
=c\,\sn\rho\,\langle T_iW,N\times t\rangle=c\,\sn\rho\,\langle T_iW,\mu\rangle,$$
 and
similarly for $Q_j$. By $x=\cs\rho\,E_0+\sn\rho\,e_r$ and \eqref{eq:Nmu}, along $\partial M$
\begin{equation*}
\begin{split}
W&=c(\cs\rho+Hs\,\sn\rho)E_0+(c\,\sn\rho-Hs\,\cs\rho)\,e_r-Hn\,E_3,\\
\mu&=c\,n\,\sn\rho\,E_0-n\,\cs\rho\,e_r+s\,E_3,
\end{split}
\end{equation*}
and \eqref{eq:bdry} follows using $s^2+n^2=1$ and $\cs^2\rho+c\,\sn^2\rho=1$.
\end{proof}

\begin{lemma}[Flux formulas]\label{lem:flux}
The following hold.
\begin{enumerate}
\item[(a)] $\cs\rho\int_0^{2\pi}s\,d\theta=-2\pi H\sn\rho$.
\item[(b)] $\int_0^{2\pi}n\,e_r\,d\theta=0$ and $\int_0^{2\pi}s\,e_r\,d\theta=0$.
\item[(c)] $\int_M(x_0N_3+c\,x_3S)\dA=-\pi\sn^2\rho$.
\end{enumerate}
Consequently, if $\cs\rho\neq0$ the mean of $s$ is $\bar s=-H\ta\rho$ and $|H|\ta\rho\le1$.
If $\cs\rho=0$ (that is, $c=1$ and $\rho=\pi/2$), then $H=0$.
\end{lemma}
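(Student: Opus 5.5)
The plan is to extract (a) and (b) from the fact that the boundary curves $P_3$, $q$ and $\tilde q$ are closed, and to get (c) from a divergence-theorem argument for the Killing field $T_3$. Since $M$ is a disk, the functions $P_i$ and $Q_j$ of Lemma~\ref{lem:formulas} are single valued on $M$. So their restrictions to $\partial M$ are closed curves, and the integrals of their derivatives over $[0,2\pi]$ vanish.

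For (a), I integrate the formula for $P_3'$ in \eqref{eq:bdry}:
\[
0=\int_0^{2\pi}P_3'\,d\theta=\sn\rho\Big(\cs\rho\int_0^{2\pi}s\,d\theta+2\pi H\sn\rho\Big).
\]
Since $\sn\rho>0$, this gives (a).

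For (b), the identity $\int q'\,d\theta=0$ with $q'=-\sn\rho\,n\,e_r$ gives $\int n\,e_r\,d\theta=0$. Likewise, $\int\tilde q'\,d\theta=0$ reads $\sn\rho\int(\sn\rho\,s-cH\cs\rho)e_r\,d\theta=0$. Because $\int_0^{2\pi}e_r\,d\theta=0$, the constant term drops out, and dividing by $\sn^2\rho\neq0$ leaves $\int s\,e_r\,d\theta=0$.

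For (c), the first step is to recognise the integrand. Since $c\langle E_0,x\rangle=x_0$, we have $T_3(x)=x_0E_3-c\,x_3E_0$. Using $\langle N,E_0\rangle=-S$, this gives
\[
\langle N,T_3(x)\rangle=x_0N_3+c\,x_3S.
\]
So (c) says that the flux of the Killing field $T_3$ through $M$ equals $-\pi\sn^2\rho$. Let $\Omega$ be the volume form of $\M$ and $\sigma:=\iota_{T_3}\Omega$. Because $T_3$ is Killing it is divergence free, so $\sigma$ is a closed $2$-form on $\M$. Since $H^2(\M)=0$ in both cases, $\sigma=d\beta$ for some $1$-form $\beta$.

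Let $D$ be the geodesic disk of radius $\rho$ bounded by the circle in the totally geodesic surface $\{X_3=0\}$. By Stokes, $\int_M x^*\sigma=\int_{\partial M}x^*\beta=\pm\int_D\sigma$. Here $x|_{\partial M}$ is a diffeomorphism onto $\partial D$, and the sign is fixed by comparing the boundary orientation $-t$ of $M$ with the orientation of $D$ given by the normal $E_3$. On $D$ the unit normal is $E_3$, and $\langle E_3,T_3\rangle=x_0=\cs r$ in geodesic polar coordinates about $E_0$. With $dA=\sn r\,dr\,d\theta$ this gives
\[
\int_D x_0\,dA=2\pi\int_0^\rho\cs r\,\sn r\,dr=\pi\sn^2\rho .
\]
Tracking the orientation then gives $-\pi\sn^2\rho$.

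The consequences follow directly. If $\cs\rho\neq0$, then (a) gives $\bar s=\frac1{2\pi}\int s\,d\theta=-H\ta\rho$, and $|s|\le1$ gives $|H|\ta\rho\le1$. If $\cs\rho=0$, then $\sn\rho=1$ and (a) reads $0=-2\pi H$, so $H=0$.

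I expect the main obstacle to be the sign in (c). I would fix it by evaluating both $\int_M x^*\sigma$ and $\int_D\sigma$ in the positive frame convention $\det(x,e_1,e_2,N)=1$, and by checking against the case where $M$ is $D$ itself. There $N=\pm E_3$, and $s=0$ forces $H=0$ by (a), consistent with a flat disk. If the exactness argument for an immersed, non-embedded $M$ needs care, a safer fallback is to write an explicit primitive $\beta$ with $d\beta=\sigma$ on $\M$ and compute $\int_{\partial M}x^*\beta$ directly from $x(p(\theta))$.
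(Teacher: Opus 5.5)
Your proposal is correct. For (a) and (b) it matches the paper's argument. The paper writes $\oint_{\partial M}\omega_B=\int_M d\omega_B=0$ and reads the identities off \eqref{eq:bdry}, which is your closedness of the curves $P_3$, $q$, $\tilde q$. The only difference is that the paper's phrasing needs just closedness of $\omega_B$, so it works in any genus as in Remark~\ref{rem:genus}, while yours goes through primitives on the disk.

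For (c) your route is genuinely different. The paper never invokes cohomology. It takes $\xi=X_1\,dX_2-X_2\,dX_1$ and identifies $(dX_1\wedge dX_2)(e_1,e_2)=\det(E_0,E_3,e_1,e_2)=x_0N_3+c\,x_3S$ by Lemma~\ref{lem:det}. It then evaluates $\xi(\tfrac{d}{d\theta})=\sn^2\rho$ on the circle, so Stokes with the orientation $-t$ gives the value and the sign at once. In your terms, $dX_1\wedge dX_2$ restricted to $\M$ equals $\iota_{T_3}\Omega$, so $\tfrac12\xi$ is exactly the explicit primitive $\beta$ of your fallback.

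Your version buys a conceptual picture. It reads the integrand as the flux of the Killing field $T_3$. It also makes clear that the value depends only on the oriented boundary circle, and that embeddedness is irrelevant, since $x^*\sigma=d(x^*\beta)$ for any smooth $x$. The cost is the appeal to $\operatorname{div}T_3=0$, $H^2(\M)=0$ and a comparison disk.

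It also needs an orientation comparison, which you assert but do not carry out. It takes one line. With $D$ oriented by $E_3$, the outward conormal is $\partial_r$ and $\partial_r\times t=E_3$, so $\partial D$ carries $+t$. This is opposite to the $-t$ induced by $M$, hence $\int_M x^*\sigma=-\int_D\sigma=-\pi\sn^2\rho$. Equivalently, $D$ with $N=t\times\partial_r=-E_3$ is itself an admissible $M$, with integrand $-x_0$. Your suggested sanity check via $s=0$ tests (a), not this sign.
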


\begin{proof}
Each $\omega_B$ is closed, so $\oint_{\partial M}\omega_B=\int_M d\omega_B=0$. By
\eqref{eq:bdry} this gives (a) (from $T_3$), $\int n\,e_r=0$ (from $T_1,T_2$), and
$\int(\sn\rho\,s-cH\cs\rho)e_r=0$ (from $R_1,R_2$). Since $\int e_r=0$, (b) follows.

For (c), let $\xi=X_1\,dX_2-X_2\,dX_1$, restricted to $M$. Then $d\xi=2\,dX_1\wedge dX_2$ and
\[
(dX_1\wedge dX_2)(e_1,e_2)=X_1(e_1)X_2(e_2)-X_2(e_1)X_1(e_2)=\det(E_0,E_3,e_1,e_2),
\]
where the last equality follows by expanding along the first two columns, since the
permutation $(0,3,1,2)$ is even.   By Lemma~\ref{lem:det} with $w=E_0$, $w'=E_3$, and using
$c\langle E_0,x\rangle=x_0$, $\langle E_0,N\rangle=-S$, $\langle E_3,x\rangle=x_3$ and
$\langle E_3,N\rangle=N_3$,
\[
\det(E_0,E_3,e_1,e_2)=x_0N_3+c\,x_3S .
\]
Along $\partial M$ we have $\xi(\tfrac{d}{d\theta})=\sn^2\rho$. Since $\partial M$ is oriented
by $-t$, Stokes' theorem gives 
$$2\int_M(x_0N_3+c\,x_3S)\dA=\int_{\partial M}\xi=-2\pi\sn^2\rho.$$

The consequences follow from (a) and $|s|\le1$.
\end{proof}

\begin{remark} \label{rem:signH}
Along $\partial M$ the inward conormal is $-\mu=n\,\partial_r-s\,E_3$. If $x(M)$ is a closed
disk bounded by the circle in a totally umbilical surface, then, by rotational symmetry
about the axis of the circle, $s$ is constant, and by Lemma~\ref{lem:flux}(a) it equals
$-H\ta\rho$ (and $H=0$ if $\cs\rho=0$). If $H\neq0$, this disk lies on one side of the totally
geodesic surface $\M\cap\{X_3=0\}$. With the orientation $N=t\times\mu$, the one contained in
$\{X_3\ge0\}$ has $s<0$, hence $H>0$, and the one contained in $\{X_3\le0\}$ has $H<0$. This fixes
the sign of $H$ in Theorem~\ref{thm:A}. If $c=1$ and
$\rho=\pi/2$, the circle is a great circle, contained in infinitely many totally geodesic
spheres. Then $H=0$ and the disks of Theorem~\ref{thm:A} are the hemispheres of these spheres
bounded by the circle.
\end{remark}

For a smooth $f\colon\R/2\pi\mathbb Z\to\R$ write
\begin{equation*}
\begin{split}
f&=\bar f+\sum_{m\ge1}(a_m\cos m\theta+b_m\sin m\theta),\\
\mathcal E_m(f)&=\pi(a_m^2+b_m^2),
\end{split}
\end{equation*}
so that $\int_0^{2\pi}(f-\bar f)^2=\sum_{m\ge1}\mathcal E_m(f)$.

\begin{lemma}\label{lem:radial}
Let $f\colon\R/2\pi\mathbb Z\to\R$ and $\gamma\colon\R/2\pi\mathbb Z\to\R^2$ be smooth with
$\gamma'=f\,e_r$. Then $\int_0^{2\pi}f\,e_r\,d\theta=0$, so $\mathcal E_1(f)=0$, and
\[
\int_0^{2\pi}|\gamma'|^2d\theta-\int_0^{2\pi}\det(\gamma,\gamma')\,d\theta=\sum_{m\ge2}\frac{m^2}{m^2-1}\,\mathcal E_m(f).
\]
In particular the left hand side is nonnegative, and it vanishes if and only if $f$ is constant.
\end{lemma}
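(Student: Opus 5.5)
We work entirely with Fourier coefficients. Write
\[
f=\bar f+\sum_{m\ge1}(a_m\cos m\theta+b_m\sin m\theta),
\]
and use complex notation: identify $\R^2$ with $\mathbb C$, so that $e_r=e^{i\theta}$ and $\gamma'=f e^{i\theta}$. Write $f=\sum_k c_k e^{ik\theta}$ with $c_{-k}=\bar c_k$, $c_0=\bar f$, and $c_m=(a_m-ib_m)/2$ for $m\ge1$.

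\emph{Vanishing of $\int f e_r$.} Since $\gamma$ is closed, $\int_0^{2\pi}\gamma'\,d\theta=0$. This gives $\int_0^{2\pi} f e_r\,d\theta=0$, so the coefficient $c_{-1}$ vanishes, and by reality $c_1=0$ as well. Hence $a_1=b_1=0$ and $\mathcal E_1(f)=0$.

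\emph{Coefficients of $\gamma$.} We have $\gamma'=\sum_k c_k e^{i(k+1)\theta}$. The term $k=-1$ is absent, so no constant term appears in $\gamma'$ and we may integrate termwise:
\[
\gamma=\gamma_0+\sum_{k\ne-1}\frac{c_k}{i(k+1)}e^{i(k+1)\theta}.
\]

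\emph{The determinant term.} Since $\det(\gamma,\gamma')=\operatorname{Im}(\bar\gamma\gamma')$, Parseval gives
\[
\int_0^{2\pi}\det(\gamma,\gamma')\,d\theta
=2\pi\operatorname{Im}\sum_{k\ne-1}\overline{\Big(\frac{c_k}{i(k+1)}\Big)}c_k
=2\pi\sum_{k\ne-1}\frac{|c_k|^2}{k+1}.
\]
The constant $\gamma_0$ contributes nothing, because $\gamma'$ has zero mean. The energy term is
\[
\int_0^{2\pi}|\gamma'|^2\,d\theta=2\pi\sum_k|c_k|^2 .
\]

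\emph{The difference.} Subtracting,
\[
\int_0^{2\pi}|\gamma'|^2\,d\theta-\int_0^{2\pi}\det(\gamma,\gamma')\,d\theta
=2\pi\sum_{k\ne-1}|c_k|^2\frac{k}{k+1}.
\]
The $k=0$ term vanishes, so $\bar f$ plays no role, and $k=\pm1$ are absent since $c_{\pm1}=0$. For $m\ge2$ we pair $k=m$ with $k=-m$. Since $|c_m|^2=|c_{-m}|^2=(a_m^2+b_m^2)/4$, the pair contributes
\[
2\pi|c_m|^2\Big(\frac{m}{m+1}+\frac{m}{m-1}\Big)=2\pi|c_m|^2\,\frac{2m^2}{m^2-1}=\frac{m^2}{m^2-1}\,\pi(a_m^2+b_m^2),
\]
which is exactly $\frac{m^2}{m^2-1}\mathcal E_m(f)$. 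Summing over $m\ge2$ yields the identity.

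\emph{Sign and equality case.} Every coefficient $\frac{m^2}{m^2-1}$ is positive for $m\ge2$. Hence the left-hand side is nonnegative, and it vanishes if and only if $\mathcal E_m(f)=0$ for all $m\ge2$. Together with $\mathcal E_1(f)=0$, this holds exactly when $f=\bar f$ is constant.

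The only delicate points are sign conventions. First, $\det(\gamma,\gamma')=\operatorname{Im}(\bar\gamma\gamma')$ with the standard orientation of $\R^2$. Second, the index shift $k\mapsto k+1$ coming from the factor $e_r$ must be applied consistently, and the pairing of $k=m$ with $k=-m$ is what produces the factor $\frac{m^2}{m^2-1}$. Convergence and the termwise integration are justified by the smoothness of $f$, since its Fourier coefficients then decay rapidly.
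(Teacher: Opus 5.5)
Your proof is correct and follows essentially the same route as the paper: you identify $e_r=e^{i\theta}$, apply Parseval to both the energy and $\operatorname{Im}(\bar\gamma\gamma')$ as in Hurwitz's argument, track the index shift coming from $e_r$, and pair the modes $\pm m$ to obtain $\frac{m^2}{m^2-1}$. The only cosmetic difference is that you work with the coefficients of $f$ and integrate to get those of $\gamma$, whereas the paper starts from $ik\gamma_k=\varphi_{k-1}$ and sums $k(k-1)|\gamma_k|^2$.
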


\begin{proof}
Since $\gamma$ is closed, $\int_0^{2\pi}f\,e_r\,d\theta=\int_0^{2\pi}\gamma'\,d\theta=0$, that is,
$a_1=b_1=0$. Identify $\R^2=\mathbb C$, $e_r=e^{i\theta}$, and write
$$\gamma=\sum_k\gamma_ke^{ik\theta}$$
$$f=\sum_m\varphi_me^{im\theta},$$
with
$\varphi_{-m}=\overline{\varphi_m}$ and $|\varphi_m|^2=\frac14(a_m^2+b_m^2)$ for $m\ge1$.
From $\gamma'=fe^{i\theta}$ we get $ik\,\gamma_k=\varphi_{k-1}$. Parseval's identity, as in
Hurwitz's proof \cite{Hur01}, gives
\[
\int|\gamma'|^2-\int\det(\gamma,\gamma')=2\pi\sum_kk(k-1)|\gamma_k|^2=2\pi\sum_{m\neq-1}\frac{m}{m+1}|\varphi_m|^2 .
\]
Here $m=k-1$, and $m=-1$ does not occur because the term $k=0$ vanishes.
The terms $m=0,1$ vanish. For $m\ge2$, grouping $\pm m$ gives
$\frac{m}{m+1}+\frac{m}{m-1}=\frac{2m^2}{m^2-1}$, and
$2\pi\,\frac{2m^2}{m^2-1}|\varphi_m|^2=\frac{m^2}{m^2-1}\mathcal E_m(f)$.
\end{proof}

\section{Proof of Theorem~\ref{thm:B}}\label{sec:B}

Throughout this section we assume the hypotheses of Theorem~\ref{thm:B} and use the notation
of Section~\ref{sec:prelim}. Since $E_3$ is the unit normal of $\M\cap\{X_3=0\}$, we must show
that $n=\langle N,E_3\rangle$ is constant along $\partial M$ and that $A=H\,\mathrm{Id}$ there.

By \eqref{eq:bdry}, the boundary curves $q$ and $\tilde q$ satisfy
\begin{equation*}
\begin{split}
&q'=f_P\,e_r,\qquad \tilde q'=f_Q\,e_r,\\
&f_P:=-\sn\rho\,n,\qquad f_Q:=-\sn\rho\,(\sn\rho\,s-cH\cs\rho),
\end{split}
\end{equation*}
and since $q$ and $\tilde q$ are closed curves, Lemma~\ref{lem:radial} applies to $\gamma=q$
and to $\gamma=\tilde q$. Let
\begin{equation*}
\begin{split}
\mathcal A:&=\int_0^{2\pi}|q'|^2d\theta-\int_0^{2\pi}\det(q,q')\,d\theta,\\
\mathcal C:&=\int_0^{2\pi}|\tilde q'|^2d\theta-\int_0^{2\pi}\det(\tilde q,\tilde q')\,d\theta .
\end{split}
\end{equation*}
Adding a constant to a function does not change $\mathcal E_m$ for $m\ge1$, so
$\mathcal E_m(f_P)=\sn^2\rho\,\mathcal E_m(n)$ and $\mathcal E_m(f_Q)=\sn^4\rho\,\mathcal E_m(s)$.
Lemma~\ref{lem:radial} therefore gives
\begin{equation}\label{eq:AC}
\begin{split}
\mathcal A&=\sn^2\rho\sum_{m\ge2}\dfrac{m^2}{m^2-1}\mathcal E_m(n)\ \ge0,\\
\mathcal C&=\sn^4\rho\sum_{m\ge2}\dfrac{m^2}{m^2-1}\mathcal E_m(s)\ \ge0 .
\end{split}
\end{equation}

\begin{proposition}\label{prop:key}
$\displaystyle \mathcal A+c\,\mathcal C=-\int_0^{2\pi}(P_3')^2\,d\theta
=-\sn^2\rho\int_0^{2\pi}(\cs\rho\,s+H\sn\rho)^2d\theta .$
\end{proposition}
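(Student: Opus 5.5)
The plan is a direct computation: expand both $\mathcal A$ and $\mathcal C$ into a boundary energy term and an enclosed-area term. The area terms are converted by \eqref{eq:stokes} into integrals over $M$, and these collapse, via Lemma~\ref{lem:flux}(c), into a constant multiple of $\sn^2\rho$. The energy terms are then handled pointwise along $\partial M$ using $s^2+n^2=1$ and $\cs^2\rho+c\,\sn^2\rho=1$.

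\emph{Area terms.} By \eqref{eq:stokes} applied to $F=q$ and $F=\tilde q$,
\[
-\int_0^{2\pi}\det(q,q')\,d\theta=2\int_M dP_1\wedge dP_2,\qquad -\int_0^{2\pi}\det(\tilde q,\tilde q')\,d\theta=2\int_M dQ_1\wedge dQ_2 .
\]
Using \eqref{eq:wedges}, I expand the combined integrand
\[
(x_0+HS)(N_3+Hx_3)+c\,(c\,x_3-HN_3)(c\,S-Hx_0)
\]
with $c^2=1$. The cross terms $\pm Hx_0x_3$ and $\pm HSN_3$ should cancel. The remaining terms group as $(1+cH^2)(x_0N_3+c\,x_3S)$. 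Lemma~\ref{lem:flux}(c) then gives the contribution $-2\pi(1+cH^2)\sn^2\rho$ to $\mathcal A+c\,\mathcal C$. This cancellation is the step I regard as the heart of the argument. It is exactly where the sign $c$ in front of $\mathcal C$ is needed, so it is the step I would check most carefully.

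\emph{Energy terms.} By \eqref{eq:bdry},
\[
|q'|^2=\sn^2\rho\,n^2,\qquad |\tilde q'|^2=\sn^2\rho\,(\sn\rho\,s-cH\cs\rho)^2,\qquad (P_3')^2=\sn^2\rho\,(\cs\rho\,s+H\sn\rho)^2 .
\]
I then show pointwise on $\partial M$ that
\[
n^2+c(\sn\rho\,s-cH\cs\rho)^2+(\cs\rho\,s+H\sn\rho)^2=1+cH^2 .
\]
In this sum the mixed terms $\mp2H\sn\rho\cs\rho\,s$ cancel. The $s^2$ terms combine to $s^2(\cs^2\rho+c\,\sn^2\rho)=s^2$, and the $H^2$ terms to $cH^2(\cs^2\rho+c\,\sn^2\rho)=cH^2$. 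Finally $n^2+s^2=1$.

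\emph{Conclusion.} Integrating the pointwise identity over $[0,2\pi]$ and multiplying by $\sn^2\rho$ gives
\[
\int_0^{2\pi}|q'|^2+c\int_0^{2\pi}|\tilde q'|^2+\int_0^{2\pi}(P_3')^2=2\pi(1+cH^2)\sn^2\rho .
\]
This exactly cancels the area contribution $-2\pi(1+cH^2)\sn^2\rho$, so $\mathcal A+c\,\mathcal C+\int_0^{2\pi}(P_3')^2\,d\theta=0$. The second equality in the statement is just the formula for $P_3'$ in \eqref{eq:bdry}. No step uses the hypotheses of Theorem~\ref{thm:B} beyond constancy of $H$, which is needed for the primitives $P_i,Q_j$ to exist via Lemma~\ref{lem:closed}.
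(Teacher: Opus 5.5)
Your proposal is correct and follows essentially the same route as the paper: you split $\mathcal A+c\,\mathcal C$ into the boundary energy and twice the integral of $(1+cH^2)(x_0N_3+c\,x_3S)$, then evaluate the latter with Lemma~\ref{lem:flux}(c). The only difference is presentational. You move $\int(P_3')^2$ to the left side and check the pointwise identity $n^2+c(\sn\rho\,s-cH\cs\rho)^2+(\cs\rho\,s+H\sn\rho)^2=1+cH^2$, whereas the paper expands $\mathcal B$ and recombines the terms.
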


\begin{proof}
By \eqref{eq:stokes} applied to $F=q$ and to $F=\tilde q$,
\[
-\int_0^{2\pi}\det(q,q')\,d\theta=2\int_M dP_1\wedge dP_2,
\]
\[
-\int_0^{2\pi}\det(\tilde q,\tilde q')\,d\theta=2\int_M dQ_1\wedge dQ_2 .
\]
Therefore $\mathcal A+c\,\mathcal C=\mathcal B+2\,\mathcal I$, where
\begin{equation*}
\begin{split}
\mathcal B&:=\int_0^{2\pi}\big(|q'|^2+c\,|\tilde q'|^2\big)\,d\theta,\\
\mathcal I&:=\int_M\big(dP_1\wedge dP_2+c\,dQ_1\wedge dQ_2\big)(e_1,e_2)\dA .
\end{split}
\end{equation*}

  By \eqref{eq:wedges} and $c^2=1$,
\[
(x_0+HS)(N_3+Hx_3)+c\,(c\,x_3-HN_3)(c\,S-Hx_0)=(1+cH^2)(x_0N_3+c\,x_3S),
\]
so Lemma~\ref{lem:flux}(c) gives $\mathcal I=-\pi(1+cH^2)\sn^2\rho$.

We now consider the boundary term $\mathcal B$.  By \eqref{eq:bdry} and $n^2=1-s^2$,
$|q'|^2=\sn^2\rho\,(1-s^2)$ and $|\tilde q'|^2=\sn^2\rho\,(\sn\rho\,s-cH\cs\rho)^2$. Hence,
writing $\int$ for $\int_0^{2\pi}\cdots d\theta$,
\[
\mathcal B=\sn^2\rho\Big[2\pi-\int s^2+c\,\sn^2\rho\int s^2-2H\sn\rho\cs\rho\int s
+2\pi cH^2\cs^2\rho\Big].
\]
Adding $\mathcal B$ and $2\mathcal I$, and using $1-c\,\sn^2\rho=\cs^2\rho$, 
$c(\cs^2\rho-1)=-\sn^2\rho$ and \eqref{eq:bdry}, we obtain
\begin{equation*}
\begin{split}
\mathcal A+c\,\mathcal C&=-\sn^2\rho\Big[\cs^2\rho\int s^2+2H\sn\rho\cs\rho\int s
+2\pi H^2\sn^2\rho\Big]\\
&=-\sn^2\rho\int_0^{2\pi}(\cs\rho\,s+H\sn\rho)^2\,d\theta\\
&=-\int_0^{2\pi}(P_3')^2\,d\theta.
\end{split}
\end{equation*}
 
\end{proof}

\begin{remark}\label{rem:MN}
The proof of Proposition~\ref{prop:key} does not use Lemma~\ref{lem:flux}(a). Formally, for
$c=0$ and after normalizing the circle to have radius one, the term $\mathcal C$ drops out and the identity becomes $\mathcal A=-\int(P_3')^2$,
which is the comparison between signed area and energy of the force curve in \cite{MN26}.
The new term $c\,\mathcal C$ has the favourable sign for $c=1$ and the unfavourable one for
$c=-1$.
\end{remark}

\begin{remark}\label{rem:genus}
As in \cite[Remark~1.1]{MN26}, the disk hypothesis is used only to obtain the primitives
$P_i,Q_j$. Identities \eqref{eq:wedges}, Lemma~\ref{lem:flux} and the value of $\mathcal I$
hold for compact surfaces of any genus whose boundary is mapped diffeomorphically onto the
circle, but in positive genus the forms $\omega_B$ need not be exact.
\end{remark}

\begin{proposition}\label{prop:bdry}
The functions $s$ and $n$ are constant along $\partial M$, and $\cs\rho\,s+H\sn\rho=0$.
\end{proposition}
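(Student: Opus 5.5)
The plan is to read Proposition~\ref{prop:key} through the Fourier expansions \eqref{eq:AC}, treating $c=1$ and $c=-1$, $|H|>1$ separately. Both cases rest on two facts. First, $\mathcal E_1(n)=\mathcal E_1(s)=0$, by Lemma~\ref{lem:flux}(b) or by Lemma~\ref{lem:radial}. Second, the $m=0$ mode of $\cs\rho\,s+H\sn\rho$ is controlled by Lemma~\ref{lem:flux}(a). Throughout, $\sn\rho>0$.

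\emph{Case $c=1$.} Here the left side $\mathcal A+\mathcal C$ of Proposition~\ref{prop:key} is $\ge 0$ by \eqref{eq:AC}, while the right side is $\le 0$. Hence $\mathcal A=\mathcal C=0$ and $\int_0^{2\pi}(\cs\rho\,s+H\sn\rho)^2\,d\theta=0$.
\begin{itemize}
\item From the last identity, $\cs\rho\,s+H\sn\rho\equiv0$ pointwise.
\item From $\mathcal C=0$ we get $\mathcal E_m(s)=0$ for all $m\ge2$. Together with $\mathcal E_1(s)=0$, this makes $s$ constant.
\item From $\mathcal A=0$ we get $\mathcal E_m(n)=0$ for $m\ge2$, so $n$ is constant as well.
\end{itemize}
This also covers $\cs\rho=0$, where the first identity reduces to $H=0$.

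\emph{Case $c=-1$, $|H|>1$.} Now $\cs\rho\ge1$, and Lemma~\ref{lem:flux} gives $\bar s=-H\ta\rho$. Write $s=\bar s+\sigma$. Then $\cs\rho\,s+H\sn\rho=\cs\rho\,\sigma$, and $\sigma$ has only modes $m\ge2$. Put $w_m=m^2/(m^2-1)\in(1,4/3]$. Proposition~\ref{prop:key} becomes
\[
\sum_{m\ge2}w_m\mathcal E_m(n)+\cs^2\rho\sum_{m\ge2}\mathcal E_m(s)=\sn^2\rho\sum_{m\ge2}w_m\mathcal E_m(s).
\]
It suffices to show that this forces $\sigma\equiv0$ and $n$ constant. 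Once $\sigma\equiv0$, the identity gives $\sum w_m\mathcal E_m(n)=0$, so $n$ is constant and $\cs\rho\,s+H\sn\rho=0$.

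The crude bound $w_m\le 4/3$ together with $\cs^2\rho=1+\sn^2\rho$ gives
\[
\big(1-\tfrac13\sn^2\rho\big)\int\sigma^2+\sum_{m\ge2} w_m\mathcal E_m(n)\le0 .
\]
This settles the matter whenever $\ta^2\rho\le 3/4$ (equivalently $\sn^2\rho\le3$, since $\cs^2\rho=1+\sn^2\rho$). The hypothesis $|H|>1$ with $|H|\ta\rho\le1$ only gives $\ta\rho<1$. So the main obstacle is to use the $n$-term, and this is where $|H|>1$ must enter.

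My first attempt would be to exploit $s^2+n^2=1$ to bound $\sum w_m\mathcal E_m(n)$ from below by a multiple of $\int\sigma^2$. Differentiating gives $n\,n'=-s\,s'$, so $\int(n')^2\ge\int\frac{s^2}{1-s^2}(s')^2$. This suggests comparing $H^1$-type quantities instead of $L^2$ ones. Accordingly, I would redo the Hurwitz computation of Lemma~\ref{lem:radial} with weights $m^2$ on both sides, or use the Wirtinger-type bound $\int(\sigma')^2\ge4\int\sigma^2$.

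The quantity $|s|$ stays near $|\bar s|=|H|\ta\rho=|H|\sn\rho/\cs\rho$. The target inequality is then a pointwise bound of the form
\[
\frac{s^2}{1-s^2}\gtrsim \sn^2\rho-\cs^2\rho\cdot(\text{const}).
\]
Such a bound should be supplied by $|H|^2>1$ combined with $\cs^2\rho-\sn^2\rho=1$.

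If the pointwise bound fails, I would fall back on Lemma~\ref{lem:flux}(c). It would give a second integral relation involving $\bar n$ and $\int_M(x_0N_3-x_3S)$, which can be combined with $2\pi(1-\bar s^2-\bar n^2)=\int\sigma^2+\int(n-\bar n)^2$ to close the inequality.
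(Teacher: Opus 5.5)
Your case $c=1$ is correct and coincides with the paper's argument. For $c=-1$, your modal identity is the paper's \eqref{eq:modal}, and the crude bound $w_m\le 4/3$ correctly settles $\sn^2\rho\le 3$. Since $|H|\ta\rho\le1$ gives $\sn^2\rho\le 1/(H^2-1)$, this covers every $|H|\ge 2/\sqrt3$, which is the paper's Remark~\ref{rem:largeH}.

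\textbf{The gap.} The remaining range is $1<|H|<2/\sqrt3$ with $\sn^2\rho>3$, where the $m=2$ coefficient can be negative. This is exactly the content of the proposition for $c=-1$, and there you give no argument, only directions that do not work:
\begin{itemize}
\item The identity of Proposition~\ref{prop:key} comes from the signed area and energy of $q,\tilde q$. It only involves $\sum w_m\mathcal E_m$ with bounded weights, so no ``Hurwitz with weights $m^2$'' is available.
\item A lower bound on $\int(n')^2$ gives no lower bound on $\int(n-\bar n)^2=\sum\mathcal E_m(n)$, since Wirtinger's inequality goes the other way.
\item Nothing keeps $|s|$ pointwise near $|\bar s|$; only integral information on $s$ is available.
\item Lemma~\ref{lem:flux}(c) was already used to evaluate $\mathcal I$. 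It gives no new boundary relation and does not involve $\bar n$.
\end{itemize}

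\textbf{The missing idea} is how $|H|>1$ should enter. It is not through $\ta\rho<1$ but through $\tanh\rho<|H|\tanh\rho=|\bar s|$.
\begin{itemize}
\item Let $v=(-s,n)$, which is $\mathbb S^1$-valued, with mean $\sigma(\cos\alpha,\sin\alpha)$, and put $\kappa=\cos^2\alpha$. The inequality above reads $\tanh^2\rho<\sigma^2\kappa$, that is, $\sinh^2\rho<\sigma^2\kappa/(1-\sigma^2\kappa)$. So large $\rho$ forces $v$ to be nearly constant.
\item With $R=\sum_{m\ge2}\mathcal E_m(s)/(m^2-1)$, the modal identity gives $2\pi(1-\sigma^2)=\int|v-\bar v|^2\le\sinh^2\rho\,R$. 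One therefore needs $R$ to be small in terms of $1-\sigma$.
\item The paper gets this by splitting $s=-\cos\alpha\,y+\sin\alpha\,z$ along and across the mean direction. Since $y\le1$ has mean $\sigma$, every nonzero Fourier coefficient of $y$ has modulus at most $1-\sigma$. Hence $\sum_{m\ge2}\mathcal E_m(y)/(m^2-1)\le3\pi(1-\sigma)^2$.
\item For the transverse part, $\int z^2\le2\int(1-y)=4\pi(1-\sigma)$.
\item Together these give $R\le2\pi(1-\sigma)\big(3\kappa(1-\sigma)+\tfrac43(1-\kappa)\big)$. An elementary inequality on $[0,1]^2$ then contradicts $\sigma<1$.
\end{itemize}
Your proposal lacks an estimate of this kind: one that bounds the dangerous low modes of $s$ by the defect $1-\sigma$ and ties $\rho$ to that defect through $|H|>1$.
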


\begin{proof}
\emph{The case $c=1$.} By Proposition~\ref{prop:key} and \eqref{eq:AC},
$0\le\mathcal A+\mathcal C=-\int(P_3')^2\le0$. Hence $\mathcal A=\mathcal C=0$ and
$\cs\rho\,s+H\sn\rho\equiv0$. By \eqref{eq:AC}, $\mathcal E_m(s)=\mathcal E_m(n)=0$ for
$m\ge2$. Together with Lemma~\ref{lem:flux}(b), $s$ and $n$ are constant. This case does
not use Lemma~\ref{lem:flux}(a).

\emph{The case $c=-1$ and $|H|>1$.} It suffices to show that $n$ is constant. Indeed, then
$s^2=1-n^2$ is constant, and $s$ is continuous on the connected curve $\partial M$, so $s$ is
constant, equal to its mean $-H\tanh\rho$.

By Lemma~\ref{lem:flux}, $\cosh\rho\,s+H\sinh\rho=\cosh\rho\,(s-\bar s)$ and $\mathcal E_1(s)=0$. Let
\[
D:=\int_0^{2\pi}(s-\bar s)^2d\theta=\sum_{m\ge2}\mathcal E_m(s).
\]
Then Proposition~\ref{prop:key} reads $\mathcal A-\mathcal C=-\sinh^2\rho\cosh^2\rho\,D$.
Inserting \eqref{eq:AC},  
we get
\begin{equation}\label{eq:modal}
\sum_{m\ge2}\Big[\frac{m^2}{m^2-1}\,\mathcal E_m(n)+\Big(1-\frac{\sinh^2\rho}{m^2-1}\Big)\mathcal E_m(s)\Big]=0 .
\end{equation}
The coefficients of $\mathcal E_m(n)$ are positive, but those of $\mathcal E_m(s)$ are
negative for the low modes with $m^2-1<\sinh^2\rho$.

Let $v=(-s,n)\colon\R/2\pi\mathbb Z\to\mathbb S^1$ and
$\bar v=\frac1{2\pi}\int v\,d\theta=\sigma(\cos\alpha,\sin\alpha)$, $0\le\sigma\le1$. Then
$$D+\int(n-\bar n)^2=\int|v-\bar v|^2=2\pi(1-\sigma^2),$$
$$ \sigma\cos\alpha=-\bar s=H\tanh\rho.$$
If $\sigma=1$, then $v$ is constant and so is $n$. Suppose, to the contrary, that $\sigma<1$, and let
$\kappa=\cos^2\alpha$. Then $\kappa>0$, since $\sigma\cos\alpha=H\tanh\rho\ne0$, and
$\sigma^2\kappa\le\sigma^2<1$.

\begin{enumerate}
\item\label{st:1} Since $|H|>1$, $\tanh^2\rho<H^2\tanh^2\rho=\sigma^2\kappa<1$. This gives
$\sinh^2\rho<\frac{\sigma^2\kappa}{1-\sigma^2\kappa}$.

\item Let $R=\sum_{m\ge2}\frac{\mathcal E_m(s)}{m^2-1}$. By \eqref{eq:modal},
$\sum_{m\ge2}\frac{m^2}{m^2-1}\mathcal E_m(n)=\sinh^2\rho\,R-D$. Together with
$\frac{m^2}{m^2-1}\ge1$ and $\mathcal E_1(n)=0$, this gives
\begin{equation}\label{eq:star}
2\pi(1-\sigma^2)=D+\int(n-\bar n)^2\le D+\sum_{m\ge2}\dfrac{m^2}{m^2-1}\mathcal E_m(n)=\sinh^2\rho\,R .
\end{equation}

\item\label{st:3} Let 
$$y=\langle v,(\cos\alpha,\sin\alpha)\rangle,\qquad z=\langle v,(-\sin\alpha,\cos\alpha)\rangle.$$
 Then $y^2+z^2=1$, $\bar y=\sigma$,
$\bar z=0$ and $s=-\cos\alpha\,y+\sin\alpha\,z$. Since $\mathcal E_m$ is a nonnegative
quadratic form, for each $m\ge2$,
$$\mathcal E_m(s)\le2\kappa\,\mathcal E_m(y)+2(1-\kappa)\mathcal E_m(z).$$

Since $1-y\ge0$ has integral $2\pi(1-\sigma)$, every complex Fourier coefficient
$\frac1{2\pi}\int y\,e^{-im\theta}\,d\theta$ of $y$ with $m\neq0$ has modulus at most
$1-\sigma$. So $\mathcal E_m(y)\le4\pi(1-\sigma)^2$, and with
$\sum_{m\ge2}\frac1{m^2-1}=\frac34$,
\[
\sum_{m\ge2}\dfrac{\mathcal E_m(y)}{m^2-1}\le3\pi(1-\sigma)^2 .
\]
Also
\[
\sum_{m\ge2}\dfrac{\mathcal E_m(z)}{m^2-1}\le\frac13\int z^2=\frac13\int(1-y)(1+y)\le\frac{4\pi}{3}(1-\sigma).
\]
Hence, $R\le2\pi(1-\sigma)\Lambda$, where $ \Lambda=3\kappa(1-\sigma)+\tfrac43(1-\kappa)$.

\item\label{st:4} We claim $\sigma^2\kappa\Lambda\le(1+\sigma)(1-\sigma^2\kappa)$ for $(\sigma,\kappa)\in[0,1]^2$. Let
\[
e(\kappa)=3\big[(1+\sigma)(1-\sigma^2\kappa)-\sigma^2\kappa\Lambda\big].
\]
Then $e(0)>0$ and $e(1)=3(1-\sigma)(1+2\sigma-2\sigma^2)\ge0$. If $9\sigma\le5$, $e$ is concave in $\kappa$.
If $9\sigma >5$, its vertex is at $\frac{3\sigma+7}{2(9\sigma-5)}\ge\frac54$. In both cases $e\ge0$ on $[0,1]$.
\end{enumerate}
If $R>0$, steps \ref{st:1}, \ref{st:3} and \ref{st:4} give
\[
\sinh^2\rho\,R<\frac{\sigma^2\kappa}{1-\sigma^2\kappa}R\le2\pi(1-\sigma)\frac{\sigma^2\kappa\Lambda}{1-\sigma^2\kappa}\le2\pi(1-\sigma^2),
\]
which contradicts \eqref{eq:star}. If $R=0$, then \eqref{eq:star} gives $\sigma=1$, again a
contradiction. Hence $\sigma=1$ and $n$ is constant.
\end{proof}

\begin{remark}\label{rem:largeH}
If $c=-1$ and $|H|\ge2/\sqrt3$, identity \eqref{eq:modal} gives the result directly. Indeed,
$|H|\tanh\rho\le1$ gives $\sinh^2\rho\le1/(H^2-1)\le3\le m^2-1$ for every $m\ge2$, so all the
coefficients in \eqref{eq:modal} are nonnegative. Hence $\mathcal E_m(n)=0$ for $m\ge2$ and, by
Lemma~\ref{lem:flux}(b), $n$ is constant. The Fourier estimate is needed only for
$1<|H|<2/\sqrt3$, where the mode $m=2$ may have a negative coefficient.
\end{remark}

\begin{corollary}\label{cor:umb}
$A=H\,\mathrm{Id}$ along $\partial M$.
\end{corollary}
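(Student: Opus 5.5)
We need to show $A=H\,\mathrm{Id}$ on $\partial M$, given Proposition~\ref{prop:bdry}: $s,n$ are constant and $\cs\rho\,s+H\sn\rho=0$. In the frame $(t,\mu)$ along $\partial M$ it suffices to check three entries: $A(t,t)=H$, $A(t,\mu)=0$, and then $A(\mu,\mu)=2H-A(t,t)=H$ follows from the trace.

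\emph{The entry $A(t,t)$.} By \eqref{eq:curvatures}, $A(t,t)=-s\,\ct\rho$. If $\cs\rho\neq0$, the relation $\cs\rho\,s=-H\sn\rho$ gives $-s\,\ct\rho=-s\cs\rho/\sn\rho=H$. If $\cs\rho=0$, then $c=1$, $\rho=\pi/2$, and $H=0$ by Lemma~\ref{lem:flux}. In this case $\ct\rho=0$, so $A(t,t)=0=H$. Hence $A(\mu,\mu)=H$ in both cases.

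\emph{The entry $A(t,\mu)$.} Differentiate $N$ along the boundary, using $N=s\,\partial_r+n\,E_3$ with $s,n$ constant. Then
\[
\frac{d}{d\ell}N=s\,\frac{d}{d\ell}\partial_r .
\]
From $\partial_r=-c\,\sn\rho\,E_0+\cs\rho\,e_r$ and $\frac{d}{d\ell}e_r=e_\theta/\sn\rho=t/\sn\rho$, we get
\[
\frac{d}{d\ell}N=s\,\ct\rho\;t .
\]
Since $dN=-dx\circ A$, this yields $A(t)=-s\,\ct\rho\,t$. Consequently $A(t,\mu)=\langle A t,\mu\rangle=0$, and the computation also confirms $A(t,t)=-s\,\ct\rho$.

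The only point needing care is that this derivative is the ambient one, but it is already tangent to $M$. This is consistent because the ambient derivative of $N$ is tangent to $M$ and $t\perp x$, so no correction along $x$ is needed.

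Since $t$ is an eigenvector with eigenvalue $H$ and $\operatorname{tr}A=2H$, the symmetric operator $A$ equals $H\,\mathrm{Id}$ at each boundary point. I do not expect a real obstacle. The one subtle case is $\cs\rho=0$, where $s$ need not be determined by the flux relation, but there the coefficient $\ct\rho$ vanishes, so both $A(t,t)$ and $A(t,\mu)$ vanish regardless of $s$.
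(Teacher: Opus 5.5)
Your proof is correct and follows essentially the same route as the paper. You obtain $A(t,t)=-s\,\ct\rho=H$ from \eqref{eq:curvatures} and $\cs\rho\,s+H\sn\rho=0$ (with $H=0$ when $\cs\rho=0$), then differentiate $N=s\,\partial_r+n\,E_3$ with $s,n$ constant to get $At=-s\,\ct\rho\,t=Ht$, and conclude $A=H\,\mathrm{Id}$ by self-adjointness and $\operatorname{tr}A=2H$.
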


\begin{proof}
If $\cs\rho\ne0$, then $s=-H\ta\rho$ and, by \eqref{eq:curvatures}, $A(t,t)=-s\ct\rho=H$.
If $\cs\rho=0$, then $H=0=A(t,t)$.

Since $s,n$ are constant and $\frac{d}{d\ell}\partial_r=\ct\rho\,t$, we get
$dN(t)=s\ct\rho\,t$, so $At=-s\ct\rho\,t=Ht$. As $A$ is self-adjoint with trace $2H$,
also $A\mu=H\mu$.
\end{proof}

Proposition~\ref{prop:bdry} and Corollary~\ref{cor:umb} prove Theorem~\ref{thm:B}.

%%%%%%%%%%%%%
\section{Proof of Theorem~\ref{thm:A}}\label{sec:A}

If $c=-1$ and $|H|\le1$, Theorem~\ref{thm:A} is proved in \cite{lo}. We therefore assume
that $c=1$, or that $c=-1$ and $|H|>1$, so that Theorem~\ref{thm:B} applies.

In a conformal coordinate $z$, the Hopf differential $Q=\langle\nabla_{\partial_z}\partial_z x,N\rangle dz^2$
is holomorphic, since $H$ is constant and the Codazzi equation holds in $\M$ \cite{Hop83}.
By Theorem~\ref{thm:B} it vanishes along $\partial M$. As in \cite[\S4.1]{MN26},
using a conformal boundary chart and Schwarz reflection, we get $Q\equiv0$, that is,
$A=H\,\mathrm{Id}$ on $M$.

Then $V:=N+Hx$ satisfies $dV=-dx\circ A+H\,dx=0$, so $V$ is a constant vector of $\R^4$
and $\langle x,V\rangle=cH$. Thus $x(M)$ lies in the totally umbilical surface
\[
\Sigma=\M\cap\{\langle X,V\rangle=cH\},\qquad \langle V,V\rangle=1+cH^2,
\]
which contains the circle and has constant Gauss curvature $c+H^2>0$. If $c=1$, $\Sigma$
is the intersection of $\mathbb S^3$ with an affine hyperplane, hence a round $2$-sphere.
If $c=-1$ and $|H|>1$, then $\langle V,V\rangle=1-H^2<0$, so $V$ is timelike and $\Sigma$
is a geodesic sphere of $\mathbb H^3$. Moreover $x\colon M\to\Sigma$ is a local isometry.

We use the following topological lemma.

\begin{lemma}[{\cite[Lemma 4.1]{MN26}}]\label{lem:degree}
Let $F\colon M\to\R^2$ be a smooth immersion of a compact connected oriented surface with
boundary such that $F|_{\partial M}$ is a diffeomorphism onto a Jordan curve $\Gamma$.
Then $F$ is an embedding onto the closed region bounded by $\Gamma$.
\end{lemma}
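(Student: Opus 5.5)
The plan is to show that $F$ is a proper local diffeomorphism from the interior of $M$ onto the open region $\Omega$ bounded by $\Gamma$. That makes it a covering map of a simply connected domain, hence a bijection.

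\emph{Step 1: $F(M)=\overline\Omega$.} Since $\dim M=2$, the immersion $F$ is a local diffeomorphism, so $F|_{\mathrm{int}\,M}$ is an open map. The set $F(M)$ is compact. Because $F(\partial M)=\Gamma$, we have $F(M)\setminus\Gamma=F(\mathrm{int}\,M)\setminus\Gamma$. This set is therefore both open and closed in $\R^2\setminus\Gamma$, so it is a union of components of $\R^2\setminus\Gamma$. It cannot contain the unbounded component, because $F(M)$ is bounded. It is nonempty, because the image of a small open set of $\mathrm{int}\,M$ is open and so cannot lie inside the curve $\Gamma$. By the Jordan curve theorem the only remaining possibility is $F(M)\setminus\Gamma=\Omega$, and taking closures gives $F(M)=\overline\Omega$.

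\emph{Step 2: $F^{-1}(\Gamma)=\partial M$.} Suppose some interior point $p$ had $F(p)\in\Gamma$. Then $F$ maps a neighbourhood of $p$ diffeomorphically onto a disk centred at a point of $\Gamma$. Such a disk meets the exterior of $\Gamma$, which contradicts Step 1. Hence $F^{-1}(\Omega)=\mathrm{int}\,M$.

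\emph{Step 3: covering argument.} Next I would show that $F\colon\mathrm{int}\,M\to\Omega$ is proper. If $K\subset\Omega$ is compact, then $F^{-1}(K)$ is closed in $M$ and, by Step 2, disjoint from $\partial M$. It is therefore a compact subset of $\mathrm{int}\,M$. A proper local diffeomorphism between manifolds is a finite covering map. The interior of $M$ is connected, and $\Omega$ is simply connected, for instance by the Riemann mapping theorem or by Schoenflies. A connected covering of a simply connected space is a homeomorphism, so $F|_{\mathrm{int}\,M}$ is a diffeomorphism onto $\Omega$.

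\emph{Step 4: conclusion.} By hypothesis $F|_{\partial M}$ is injective, and by Step 2 the images of $\partial M$ and of $\mathrm{int}\,M$ are disjoint. Hence $F$ is an injective immersion of a compact surface. It is therefore an embedding, with image $\overline\Omega$.

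The main obstacle is Step 2: excluding interior points that are mapped onto $\Gamma$. This is precisely what makes the restriction to $\mathrm{int}\,M$ proper over $\Omega$, and the rest of the argument is standard covering space theory.
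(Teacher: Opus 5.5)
Your proof is correct, and it takes a different route from the one the paper points to. The paper gives no proof of its own here. It quotes the lemma from Maximo--Nunes and, in the introduction, calls this step a degree argument, so that is the natural comparison.

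In a degree argument one takes $y\in\R^2\setminus\Gamma$. Its preimages lie in $\mathrm{int}\,M$ and are finite in number, and $\sum_{p\in F^{-1}(y)}\operatorname{sign}\det dF_p$ equals the winding number of $F|_{\partial M}$ about $y$. This is $\pm1$ for $y\in\Omega$ and $0$ outside $\overline\Omega$. Since $\det dF$ never vanishes, it has constant sign on the connected surface $M$, so the signed count is the actual number of preimages. Hence $F$ covers $\Omega$ exactly once and misses the exterior, and points of $\Gamma$ are then handled exactly as in your Step 2.

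You replace the counting by covering-space theory: properness, connectedness of $\mathrm{int}\,M$, and simple connectivity of $\Omega$. This avoids all orientation and sign bookkeeping. The cost is that you need $\Omega$ to be simply connected, which is a heavier topological input than the winding-number form of the Jordan curve theorem used by the degree argument. Justify it by Schoenflies or by the connectedness of $\widehat{\mathbb C}\setminus\Omega$. The Riemann mapping theorem assumes simple connectivity, so it cannot be cited to prove it.

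In the paper's application in Section 4 the difference disappears. There $\Gamma$ is the stereographic image of a circle that does not pass through the projection point, hence a round circle, and $\Omega$ is a round disk.
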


With the boundary oriented by $-t$ and inward conormal $-\mu$,
\eqref{eq:curvatures} gives geodesic curvature $k_g=-n\ct\rho$. Hence
$$\int_{\partial M}k_g\,d\ell=-2\pi n\cs\rho,$$
 and Gauss--Bonnet gives
\[
\mathrm{Area}(M)=\frac{2\pi(1+n\cs\rho)}{c+H^2}.
\]
We claim $n\cs\rho<1$. For $c=1$, $|n|\le1$ and $0\le\cos\rho<1$. For $c=-1$,
$n^2\cosh^2\rho=\cosh^2\rho-H^2\sinh^2\rho<1$ because $|H|>1$.
Hence $\mathrm{Area}(M)<4\pi/(c+H^2)=\mathrm{Area}(\Sigma)$. So $x(M)\ne\Sigma$, since
otherwise the area formula for the local isometry $x$ would give
$\mathrm{Area}(M)\ge\mathrm{Area}(\Sigma)$. Stereographic projection of $\Sigma$ from a
point of $\Sigma\setminus x(M)$ and Lemma~\ref{lem:degree} show that $x$ is an embedding
onto one of the two closed caps of $\Sigma$ bounded by the circle. \qed

\section*{Acknowledgements}
The author thanks Davi Maximo for reading a preliminary version of these computations and for his encouragement to write them up.
The author has been partially supported by MINECO/MICINN/FEDER grant no. PID2023-150727NB-I00, and by the ``Mar\'{\i}a de Maeztu'' Excellence Unit IMAG, reference CEX2020-001105-M, funded by MCINN/AEI/10.13039/501100011033/CEX2020-001105-M.
%%%%%%%%%%%%%%%%%%%%%%%%%%%%%%%%%%%%%%%%%%%%%%%%%%%%%%%%%%%%%%

\end{document}